\newtheorem{theorem}{Theorem}[section]
\newtheorem{lemma}[theorem]{Lemma}
\newtheorem{proposition}[theorem]{Proposition}
\newtheorem{corollary}[theorem]{Corollary}
\theoremstyle{definition}
\theoremstyle{remark}
\newtheorem{remark}[theorem]{Remark}
\numberwithin{equation}{section}
\begin{document}
\title[Berezin number inequalities for  Hilbert space operators]{ Berezin number inequalities for  Hilbert space operators}
\author[M. Bakherad and M.T. Karaev ]{Mojtaba Bakherad$^1$ and Mubariz T. Karaev$^2$}
\address{$^1$Department of Mathematics, Faculty of Mathematics, University
of Sistan and Baluchestan, Zahedan, I.R.Iran.}
\email{mojtaba.bakherad@gmail.com; bakherad@member.ams.org}
\subjclass[2010]{Primary: 15A60, Secondary:  47B20.}
\keywords{Reproducing kernel, numerical range, numerical radius.}
\address{$^2$Department of Mathematics, College of Science, King Saud University, P.O.Box 2455, Riyadh 11451, Saudi Arabia}
\email{mgarayev@ksu.edu.sa}
\begin{abstract}

In this paper, by using of the definition Berezin symbol,  we show some Berezin number inequalities. Among other inequalities, it is shown that if
$A, B, X\in{\mathbb{B}}(\mathscr H)$, then%
$$\mathbf{ber}(AX\pm XA)\leqslant \mathbf{ber}^{\frac{1}{2}}\left(A^*A+AA^*\right)\mathbf{ber}^{\frac{1}{2}}\left(X^*X+XX^*\right)$$
and %
$$\mathbf{ber}^2(A^*XB)\leqslant\|X\|^2\mathbf{ber}(A^*A)\mathbf{ber}(B^*B).$$
\end{abstract}

\maketitle



\section{Introduction}
Let ${\mathbb{B}}(\mathscr H)$ denote the $C^{\ast }$-algebra
of all bounded linear operators on ${\mathscr H}$ with the identity $I$.
A functional Hilbert space is a Hilbert space ${\mathscr H}={\mathscr H}(\Omega)$ of complex-valued
functions on a  set $\Omega$, which has the property that point evaluations are continuous
i.e., for each $\lambda\in\Omega$ the map $f\longmapsto f(\lambda)$ is a continuous linear functional on ${\mathscr H}$.   Berezin set and Berezin number of the operator $A$ are defined by
 $\mathbf{Ber}(A):=\big\{\widetilde{A}(\lambda):\lambda\in\Omega\big\}$ and $\mathbf{ber}(A):=\sup\big\{|\widetilde{A}(\lambda)|:\lambda\in \Omega\big\}$,
respectively. It is clear that
the Berezin symbol $\widetilde{A}$  is the bounded function on $\Omega$ whose values lies in the numerical range of
the operator $A$ and hence $ \mathbf{Ber}(A)\subseteq W(A)$(numerical radius) and $\mathbf{ber}(A)\leqslant w(A)$(numerical range)
for all $A\in{\mathbb{B}}(\mathscr H)$. The Berezin number of an operator $A$  satisfies the following properties:
\begin{align}\label{prop}
&({\rm i})\,\,\mathbf{ber}(A)\leqslant \|A\|.\\&
({\rm ii})\,\,\mathbf{ber}(\alpha A)=|\alpha| \mathbf{ber}(A)\,\,\textrm{for all}\,\, \alpha\in {\mathbb{C}}.\nonumber\\&
({\rm iii})\,\,\mathbf{ber}(A+B)\leqslant  \mathbf{ber}(A)+ \mathbf{ber}(B)\nonumber.
\end{align}
 The Berezin symbol  is widely applied in the various questions of uniquely determines
the operator and analysis. For further information about Berezin symbol we refer the reader to \cite{englis, kar3, kar4, nor} and references therein.\\
In this paper, by using some ideas of \cite{ host, yam},  we present several Berezin number inequalities. In particular, we obtain the inequalities
\begin{align*}
&({\rm i})\,\,\mathbf{ber}(AX\pm XA)\leqslant \mathbf{ber}^{\frac{1}{2}}\left(A^*A+AA^*\right)\mathbf{ber}^{\frac{1}{2}}\left(X^*X+XX^*\right);\\&
({\rm ii})\,\,\mathbf{ber}(A^*XB+B^*YA)\leqslant2\sqrt{\|X\|\|Y\|}\mathbf{ber}^{\frac{1}{2}}\left(B^*B\right)\mathbf{ber}^{\frac{1}{2}}\left(AA^*\right),
\end{align*}
where $A, B,   X, Y\in {\mathbb B}({\mathscr H}(\Omega))$.
\section{The results}
\bigskip To prove our first result, we need the following lemma.
\begin{lemma}\label{L1}
Let $X\in{\mathbb B}({\mathscr H}(\Omega))$. Then
\begin{align*}
\mathbf{ber}(X)=\underset{\theta\in
\mathbb{R}
}{\sup}\,\mathbf{ber}\left(\mathfrak{{Re}}(e^{i\theta}X)\right)=\underset{\theta\in
\mathbb{R}
}{\sup}\,\mathbf{ber}\left({\mathfrak{Im}}(e^{i\theta}X)\right),
\end{align*}
where $\mathfrak{Re}(X)=\frac{X+X^*}{2}$ and $\mathfrak{Im}(X)=\frac{X-X^*}{2i}$.
\end{lemma}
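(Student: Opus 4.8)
The plan is to reduce both equalities to a single pointwise identity for the Berezin symbol, followed by an elementary optimization over the phase $\theta$. Writing $\widehat{k}_\lambda$ for the normalized reproducing kernel of $\mathscr H(\Omega)$ at $\lambda\in\Omega$, so that $\widetilde{Y}(\lambda)=\langle Y\widehat{k}_\lambda,\widehat{k}_\lambda\rangle$ for every $Y\in{\mathbb B}({\mathscr H}(\Omega))$, I would first record that
$$\widetilde{\mathfrak{Re}(Y)}(\lambda)=\Big\langle \tfrac{Y+Y^*}{2}\widehat{k}_\lambda,\widehat{k}_\lambda\Big\rangle=\tfrac12\big(\langle Y\widehat{k}_\lambda,\widehat{k}_\lambda\rangle+\overline{\langle Y\widehat{k}_\lambda,\widehat{k}_\lambda\rangle}\big)=\mathfrak{Re}\big(\widetilde{Y}(\lambda)\big),$$
and in the same way $\widetilde{\mathfrak{Im}(Y)}(\lambda)=\mathfrak{Im}\big(\widetilde{Y}(\lambda)\big)$. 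Applying this to $Y=e^{i\theta}X$ gives $\widetilde{\mathfrak{Re}(e^{i\theta}X)}(\lambda)=\mathfrak{Re}\big(e^{i\theta}\widetilde{X}(\lambda)\big)$ and $\widetilde{\mathfrak{Im}(e^{i\theta}X)}(\lambda)=\mathfrak{Im}\big(e^{i\theta}\widetilde{X}(\lambda)\big)$.

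Next I would invoke the elementary scalar fact that for any $z\in{\mathbb C}$ one has $\sup_{\theta\in\mathbb R}|\mathfrak{Re}(e^{i\theta}z)|=|z|$ --- writing $z=|z|e^{i\phi}$ this is just $\sup_{\theta}|z||\cos(\theta+\phi)|=|z|$ --- and likewise $\sup_{\theta\in\mathbb R}|\mathfrak{Im}(e^{i\theta}z)|=|z|$. Since a supremum over the product index set $\mathbb R\times\Omega$ may be evaluated in either order, it follows that
$$\sup_{\theta\in\mathbb R}\mathbf{ber}\big(\mathfrak{Re}(e^{i\theta}X)\big)=\sup_{\theta\in\mathbb R}\sup_{\lambda\in\Omega}\big|\mathfrak{Re}(e^{i\theta}\widetilde{X}(\lambda))\big|=\sup_{\lambda\in\Omega}\sup_{\theta\in\mathbb R}\big|\mathfrak{Re}(e^{i\theta}\widetilde{X}(\lambda))\big|=\sup_{\lambda\in\Omega}\big|\widetilde{X}(\lambda)\big|=\mathbf{ber}(X),$$
and the same chain of equalities with $\mathfrak{Im}$ in place of $\mathfrak{Re}$ yields the second equality. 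Alternatively one may note $\mathfrak{Im}(e^{i\theta}X)=\mathfrak{Re}(e^{i(\theta-\pi/2)}X)$, so that the two suprema over $\theta$ are literally the same and only the $\mathfrak{Re}$ case needs treatment.

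I do not anticipate a genuine obstacle: the argument is essentially bookkeeping. The two points worth stating carefully are the interchange of the suprema over $\theta$ and over $\lambda$ (harmless, as both are ordinary suprema over fixed sets) and the scalar optimization $\sup_{\theta}|\mathfrak{Re}(e^{i\theta}z)|=|z|$, which is the real content of the lemma --- it is exactly what guarantees that sweeping the phase recovers the full modulus $|\widetilde{X}(\lambda)|$ rather than merely a lower bound for it. One should also note the trivial direction $\mathbf{ber}\big(\mathfrak{Re}(e^{i\theta}X)\big)\leqslant\mathbf{ber}(e^{i\theta}X)=\mathbf{ber}(X)$, which follows from $|\mathfrak{Re}(w)|\leqslant|w|$ together with property (ii) of \eqref{prop}, in case one prefers to present the result as a pair of inequalities.
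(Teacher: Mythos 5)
Your proposal is correct and follows essentially the same route as the paper: both reduce the claim to the pointwise identity $\sup_{\theta}\left|\left\langle \mathfrak{Re}(e^{i\theta}X)\hat{k}_{\lambda},\hat{k}_{\lambda}\right\rangle\right|=\left|\left\langle X\hat{k}_{\lambda},\hat{k}_{\lambda}\right\rangle\right|$ and then interchange the suprema over $\theta$ and $\lambda$. Your explicit scalar optimization and the phase-shift remark for the $\mathfrak{Im}$ case are just slightly more detailed renditions of what the paper does.
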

\begin{proof}
Let $\hat{k}_{\lambda}$ be  the normalized reproducing kernel of ${\mathscr H(\Omega)}$.
It follows from
$$\underset{\theta\in
\mathbb{R}
}{\sup}\,\left\langle \mathfrak{Re}(e^{i\theta}X)\hat{k}_{\lambda}, \hat{k}_{\lambda}\right\rangle=\left|\left\langle X\hat{k}_{\lambda}, \hat{k}_{\lambda}\right\rangle\right|$$
 that
 \begin{align*}
\underset{\theta\in
\mathbb{R}
}{\sup}\,\mathbf{ber}(\mathfrak{Re}(e^{i\theta}X))&=
\underset{\theta\in
\mathbb{R}
}{\sup}\,\underset{\lambda\in
\Omega
}{\sup}\,\left|\left\langle \mathfrak{Re}(e^{i\theta}X)\hat{k}_{\lambda}, \hat{k}_{\lambda}\right\rangle\right|\\&=\underset{\lambda\in
\Omega
}{\sup}\,\underset{\theta\in
\mathbb{R}
}{\sup}\,\left|\left\langle \mathfrak{Re}(e^{i\theta}X)\hat{k}_{\lambda}, \hat{k}_{\lambda}\right\rangle\right|\\&=\underset{\lambda\in
\Omega
}{\sup}\left|\left\langle X\hat{k}_{\lambda}, \hat{k}_{\lambda}\right\rangle\right|\\&=\mathbf{ber}(X).
\end{align*}
The proof of the second equation is similar.
\end{proof}
\begin{remark}
If $X=H+iK$ be the certain decomposition of the operator $X$, then by using this fact
$$|\langle Hx,x\rangle|\leqslant|\langle Xx, x\rangle|=|\langle Hx,x\rangle+i\langle Kx,x\rangle|=\sqrt{|\langle Hx,x\rangle|^2+|\langle Kx,x\rangle|^2}\qquad(x\in{\mathscr H})$$ and Lemma \ref{L1}, we have
$$\mathbf{ber}(H)=\mathbf{ber}\left(\mathfrak{Re}(X)\right)\leqslant\mathbf{ber}(X)\leqslant\sqrt{\mathbf{ber}^2(H)+\mathbf{ber}^2(K)}.$$
\end{remark}
Now, by applying  Lemma \ref{L1}, we show an upper bound for
$\mathbf{ber}(AX\pm XA^*)$.
\begin{theorem}\label{main1}
Let $A, X\in {\mathbb B}({\mathscr H}(\Omega))$. Then
{\footnotesize\begin{align*}
&\mathbf{ber}^2(AX\pm XA^*)\\&\leqslant2 \|A\|^2\left(\mathbf{ber}(H^2)+\mathbf{ber}(K^2)+\sqrt{\left(\mathbf{ber}(H^2)-\mathbf{ber}(K^2)\right)^2+\mathbf{ber}^2(HK+KH)}\right),
\end{align*}}
where $X=H+iK$ is the certain decomposition of the operator $X$.
\end{theorem}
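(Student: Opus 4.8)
The plan is to use Lemma \ref{L1} to replace $\mathbf{ber}(AX\pm XA^{*})$ by a supremum of Berezin numbers of self-adjoint operators of the form $AH_{\theta}+H_{\theta}A^{*}$, to bound each of those, and then to optimize over the rotation parameter $\theta$.

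First I would record an auxiliary estimate: for any self-adjoint $S$ and any $A$ in ${\mathbb B}({\mathscr H}(\Omega))$,
\[
\mathbf{ber}(AS\pm SA^{*})\le 2\|A\|\,\mathbf{ber}^{1/2}(S^{2}).
\]
This follows from a one-line reproducing-kernel computation: since $\langle SA^{*}\hat{k}_{\lambda},\hat{k}_{\lambda}\rangle=\overline{\langle AS\hat{k}_{\lambda},\hat{k}_{\lambda}\rangle}$, we get $|\langle (AS\pm SA^{*})\hat{k}_{\lambda},\hat{k}_{\lambda}\rangle|\le 2|\langle S\hat{k}_{\lambda},A^{*}\hat{k}_{\lambda}\rangle|\le 2\|A\|\,\|S\hat{k}_{\lambda}\|$, and $\|S\hat{k}_{\lambda}\|^{2}=\langle S^{2}\hat{k}_{\lambda},\hat{k}_{\lambda}\rangle\le\mathbf{ber}(S^{2})$ because $S^{2}\ge 0$; taking the supremum over $\lambda\in\Omega$ finishes it.

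Next I would apply Lemma \ref{L1} to $Y=AX\pm XA^{*}$. Expanding $2\,\mathfrak{Re}(e^{i\theta}Y)=e^{i\theta}Y+e^{-i\theta}Y^{*}$, writing $X=H+iK$, and regrouping so that $A$ stands on the left and $A^{*}$ on the right, the identities $e^{i\theta}X+e^{-i\theta}X^{*}=2H_{\theta}$ and $e^{i\theta}X-e^{-i\theta}X^{*}=2iK_{\theta}$, with the self-adjoint operators
\[
H_{\theta}=(\cos\theta)H-(\sin\theta)K,\qquad K_{\theta}=(\sin\theta)H+(\cos\theta)K,
\]
give $\mathfrak{Re}\big(e^{i\theta}(AX+XA^{*})\big)=AH_{\theta}+H_{\theta}A^{*}$ and $\mathfrak{Re}\big(e^{i\theta}(AX-XA^{*})\big)=i(AK_{\theta}-K_{\theta}A^{*})$. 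Combining this with the auxiliary estimate, property (ii) in \eqref{prop}, and Lemma \ref{L1}, I obtain
\[
\mathbf{ber}^{2}(AX+XA^{*})\le 4\|A\|^{2}\sup_{\theta\in\mathbb{R}}\mathbf{ber}(H_{\theta}^{2}),\qquad \mathbf{ber}^{2}(AX-XA^{*})\le 4\|A\|^{2}\sup_{\theta\in\mathbb{R}}\mathbf{ber}(K_{\theta}^{2}).
\]

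Finally I would estimate the right-hand sides. Since $H_{\theta}^{2}=\cos^{2}\theta\,H^{2}+\sin^{2}\theta\,K^{2}-\cos\theta\sin\theta\,(HK+KH)$, subadditivity and absolute homogeneity of $\mathbf{ber}$ give $\mathbf{ber}(H_{\theta}^{2})\le\tfrac{1}{2}(\alpha+\beta)+\tfrac{1}{2}(\alpha-\beta)\cos2\theta+\tfrac{1}{2}\gamma\,|\sin2\theta|$, where $\alpha=\mathbf{ber}(H^{2})$, $\beta=\mathbf{ber}(K^{2})$, $\gamma=\mathbf{ber}(HK+KH)$; taking the supremum over $\theta$ and applying $u\cos2\theta+v\sin2\theta\le\sqrt{u^{2}+v^{2}}$ yields $\sup_{\theta}\mathbf{ber}(H_{\theta}^{2})\le\tfrac{1}{2}\big(\alpha+\beta+\sqrt{(\alpha-\beta)^{2}+\gamma^{2}}\big)$, and the same bound holds for $K_{\theta}^{2}$ (it differs only by swapping $\cos^{2}\theta$ and $\sin^{2}\theta$). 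Substituting this into the previous display gives exactly the asserted inequality. The main obstacle is the bookkeeping in the regrouping step: one must verify carefully that all cross terms of $e^{i\theta}Y+e^{-i\theta}Y^{*}$ really collapse to $AH_{\theta}+H_{\theta}A^{*}$ (respectively $i(AK_{\theta}-K_{\theta}A^{*})$), tracking the signs in the two cases $\pm$; after that, everything is a routine combination of Cauchy--Schwarz, positivity of $S^{2}$, and a single-variable trigonometric maximization.
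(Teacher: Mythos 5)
Your proof is correct and follows essentially the same route as the paper: Lemma \ref{L1} reduces the problem to the self-adjoint operator $A\,\mathfrak{Re}(e^{i\theta}X)+\mathfrak{Re}(e^{i\theta}X)A^{*}$, Cauchy--Schwarz at the normalized kernels gives the bound $4\|A\|^{2}\left\langle\left(\mathfrak{Re}(e^{i\theta}X)\right)^{2}\hat{k}_{\lambda},\hat{k}_{\lambda}\right\rangle$, and the same expansion of $\left(\cos\theta H-\sin\theta K\right)^{2}$ with the trigonometric maximization produces the stated right-hand side. The only differences are cosmetic: you handle the minus sign directly via $\mathfrak{Im}(e^{i\theta}X)$ and take the supremum over $\lambda$ before the one over $\theta$, whereas the paper proves the plus case and obtains the minus case by replacing $A$ with $iA$.
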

\begin{proof}
Suppose that  $\hat{k}_{\lambda}$ is  the normalized reproducing kernel of ${\mathscr H(\Omega)}$. Then
\begin{align*}
&\left|\left\langle\mathfrak{Re}(e^{i\theta}(AX+ XA^*))\hat{k}_{\lambda},\hat{k}_{\lambda}\right\rangle\right|^2
\\&=\left|\left\langle\mathfrak{Re}\left((A\mathfrak{Re}(e^{i\theta}X)+ \mathfrak{Re}(e^{i\theta}X)A^*)\right)\hat{k}_{\lambda},\hat{k}_{\lambda}\right\rangle\right|^2\\&
\qquad\qquad\qquad\qquad\qquad (\textrm {since}\, \mathfrak{Re}(T)=\mathfrak{Re}(T^*))
\\&\leqslant\left|\left\langle(A\mathfrak{Re}(e^{i\theta}X)+ \mathfrak{Re}(e^{i\theta}X)A^*)\hat{k}_{\lambda},\hat{k}_{\lambda}\right\rangle\right|^2
\\&\qquad\qquad\qquad\qquad\qquad (\textrm {since}\, |\langle\mathfrak{Re}(T)x,x\rangle|\leqslant|\langle Tx,x\rangle|)\\&
\leqslant2\left(\left|\left\langle A\mathfrak{Re}(e^{i\theta}X)\hat{k}_{\lambda},\hat{k}_{\lambda}\right\rangle\right|^2+ \left|\left\langle\mathfrak{Re}(e^{i\theta}X)A^*\hat{k}_{\lambda},\hat{k}_{\lambda}\right\rangle\right|^2\right)
 \\&\quad\quad\quad\qquad(\textrm {by the triangular inequality and the convexity\,} f(t)=t^2)
 \\&=2\left(\left|\left\langle \mathfrak{Re}(e^{i\theta}X)\hat{k}_{\lambda},A^*\hat{k}_{\lambda}\right\rangle\right|^2+ \left|\left\langle A^*\hat{k}_{\lambda},\mathfrak{Re}(e^{i\theta}X)\hat{k}_{\lambda}\right\rangle\right|^2\right)
\\&\leqslant2\left(\|A^*\|^2\left\|\mathfrak{Re}(e^{i\theta}X)\hat{k}_{\lambda}\right\|^2+\|A^*\|^2\left\|\mathfrak{Re}(e^{i\theta}X)\hat{k}_{\lambda}\right\|^2\right)
\\&=4\|A\|^2\left\langle\mathfrak{Re}(e^{i\theta}X)\hat{k}_{\lambda},\mathfrak{Re}(e^{i\theta}X)\hat{k}_{\lambda}\right\rangle
\\&=4\|A\|^2\left\langle(\mathfrak{Re}(e^{i\theta}X))^2\hat{k}_{\lambda},\hat{k}_{\lambda}\right\rangle.
\end{align*}
It follows from
\begin{align*}
(\mathfrak{Re}(e^{i\theta}X))^2&=(\mathfrak{Re}(e^{i\theta}(H+iK)))^2\\&
=(\cos\theta H-\sin\theta K)^2\\&=\cos^2\theta H^2+\sin^2\theta K^2-\cos\theta\sin\theta (HK+KH)
\end{align*}
that
{\footnotesize\begin{align*}
&\underset{\theta\in
\mathbb{R}
}{\sup}\,\left\langle\left(\mathfrak{Re}(e^{i\theta}X)\right)^2\hat{k}_{\lambda},\hat{k}_{\lambda}\right\rangle
\\&=\underset{\theta\in
\mathbb{R}
}{\sup}\Big(\cos^2\theta \left\langle H^2\hat{k}_{\lambda},\hat{k}_{\lambda}\right\rangle+\sin^2\theta \left\langle K^2\hat{k}_{\lambda},\hat{k}_{\lambda}\right\rangle-\cos\theta\sin\theta \left(\left\langle (HK+ KH)\hat{k}_{\lambda},\hat{k}_{\lambda}\right\rangle\right)\Big)\\&
\leqslant\underset{\theta\in
\mathbb{R}
}{\sup}\Big(\cos^2\theta \mathbf{ber}(H^2)+\sin^2\theta \mathbf{ber}( K^2)- \cos\theta\sin\theta \left(\left\langle (HK+ KH)\hat{k}_{\lambda},\hat{k}_{\lambda}\right\rangle\right)\Big)\\&\leqslant\frac{1}{2}\left(\mathbf{ber}(H^2)+\mathbf{ber}(K^2)
+\sqrt{\left(\mathbf{ber}(H^2)-\mathbf{ber}(K^2)\right)^2+\left(\left\langle (HK+ KH)\hat{k}_{\lambda},\hat{k}_{\lambda}\right\rangle\right)^2}\right),
\end{align*}}
whence
\begin{align*}
&\underset{\theta\in
\mathbb{R}
}{\sup}\,\left|\left\langle\mathfrak{Re}\left(e^{i\theta}(AX+ XA^*)\right)\hat{k}_{\lambda},\hat{k}_{\lambda}\right\rangle\right|^2\\&\leqslant2\|A\|^2\Big(\mathbf{ber}(H^2)+\mathbf{ber}(K^2)
\\&\quad+\sqrt{\left(\mathbf{ber}(H^2)-\mathbf{ber}(K^2)\right)^2+\left(\left\langle (HK+ KH)\hat{k}_{\lambda},\hat{k}_{\lambda}\right\rangle\right)^2}\Big).
\end{align*}
Taking the supremum over all $\lambda\in\Omega$ and using Lemma \ref{L1},  we get
{\footnotesize\begin{align}\label{waw}
&\mathbf{ber}^2(AX+ XA^*)\nonumber\\&=\underset{\theta\in
\mathbb{R}
}{\sup}\,\mathbf{ber}(\mathfrak{Re}(e^{i\theta}(AX+ XA^*)))\\&\leqslant2\|A\|^2\Big(\mathbf{ber}(H^2)+\mathbf{ber}(K^2)+\sqrt{\left(\mathbf{ber}(H^2)-\mathbf{ber}(K^2)\right)^2+\mathbf{ber}^2(HK+KH)}\Big).
\end{align}}
Replacing $A$ by $iA$ in \eqref{waw}, we have
{\footnotesize\begin{align*}
&\mathbf{ber}^2(AX- XA^*)\\&\leqslant2\|A\|^2\Big(\mathbf{ber}(H^2)+\mathbf{ber}(K^2)+\sqrt{\left(\mathbf{ber}(H^2)-\mathbf{ber}(K^2)\right)^2
+\mathbf{ber}^2(HK+KH)}\Big).
\end{align*}}
Hence
{\footnotesize\begin{align*}
&\mathbf{ber}^2(AX\pm XA^*)\\&\leqslant2\|A\|^2\Big(\mathbf{ber}(H^2)+\mathbf{ber}(K^2)+\sqrt{\left(\mathbf{ber}(H^2)-\mathbf{ber}(K^2)\right)^2+
+\mathbf{ber}^2(HK+KH)}\Big)
\end{align*}}
as required.
\end{proof}
\bigskip Theorem \ref{main1} includes a special case as follows.
\begin{corollary}
Let $A, X\in {\mathbb B}({\mathscr H}(\Omega))$
Then\\
$({\rm i})$ If $HK+KH=0$, then $\mathbf{ber}(AX\pm XA^*)\leqslant {2}\|A\|\max\left(\mathbf{ber}^{\frac{1}{2}}(H^2),\mathbf{ber}^{\frac{1}{2}}(K^2)\right).$\\
$({\rm ii})$ If $X$ is self-adjoint, then $\mathbf{ber}(AX\pm XA^*)\leqslant {2}\|A\|\mathbf{ber}^{\frac{1}{2}}(X^2).$\\
$({\rm iii})$ If $X$ is self-adjoint, then $\mathbf{ber}(AX)\leqslant \|A\|\mathbf{ber}^{\frac{1}{2}}(X^2)$.\\
where $X=H+iK$ is the certain decomposition of the operator $X$.
\end{corollary}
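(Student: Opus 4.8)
The plan is to obtain all three items as direct consequences of Theorem \ref{main1}, with only part (iii) requiring an additional idea.

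For part (i), the hypothesis $HK+KH=0$ forces $\mathbf{ber}(HK+KH)=0$, so the square root in Theorem \ref{main1} collapses to $\sqrt{\left(\mathbf{ber}(H^2)-\mathbf{ber}(K^2)\right)^2}=\left|\mathbf{ber}(H^2)-\mathbf{ber}(K^2)\right|$. Using the elementary identity $a+b+|a-b|=2\max(a,b)$ (valid for $a,b\geqslant0$, which applies since $H^2,K^2\geqslant0$), the bound of Theorem \ref{main1} becomes $\mathbf{ber}^2(AX\pm XA^*)\leqslant 4\|A\|^2\max\left(\mathbf{ber}(H^2),\mathbf{ber}(K^2)\right)$. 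Taking square roots and using $\sqrt{\max(a,b)}=\max(\sqrt a,\sqrt b)$ yields the stated inequality.

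For part (ii), observe that when $X$ is self-adjoint its certain decomposition has $H=X$ and $K=0$; in particular $HK+KH=0$ and $\mathbf{ber}(K^2)=0$, so part (i) (equivalently, a direct substitution into Theorem \ref{main1}) gives $\mathbf{ber}(AX\pm XA^*)\leqslant 2\|A\|\,\mathbf{ber}^{\frac12}(X^2)$.

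Part (iii) is the only step with genuine content: the idea is to rotate and ``fold'' $AX$ into the self-adjoint-commutator form handled by part (ii). Since $X=X^*$, for every $\theta\in\mathbb{R}$ one has
\begin{align*}
2\,\mathfrak{Re}\!\left(e^{i\theta}AX\right)=e^{i\theta}AX+e^{-i\theta}(AX)^*=e^{i\theta}AX+e^{-i\theta}XA^*=\left(e^{i\theta}A\right)X+X\left(e^{i\theta}A\right)^*.
\end{align*}
Applying part (ii) with $e^{i\theta}A$ in place of $A$ (legitimate for the $+$ sign, and $\|e^{i\theta}A\|=\|A\|$) gives $\mathbf{ber}\!\left(2\,\mathfrak{Re}(e^{i\theta}AX)\right)\leqslant 2\|A\|\,\mathbf{ber}^{\frac12}(X^2)$, hence $\mathbf{ber}\!\left(\mathfrak{Re}(e^{i\theta}AX)\right)\leqslant \|A\|\,\mathbf{ber}^{\frac12}(X^2)$ for all $\theta$. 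Taking the supremum over $\theta\in\mathbb{R}$ and invoking Lemma \ref{L1}, which identifies $\mathbf{ber}(AX)$ with precisely this supremum, completes the proof. The main point to be careful about is the reduction $2\,\mathfrak{Re}(e^{i\theta}AX)=(e^{i\theta}A)X+X(e^{i\theta}A)^*$, which uses self-adjointness of $X$ to turn $(AX)^*$ into $XA^*$; everything else is bookkeeping.
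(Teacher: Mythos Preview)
Your proof is correct and follows essentially the same route as the paper: parts (i) and (ii) are obtained by specializing Theorem \ref{main1} exactly as you indicate, and for part (iii) the paper likewise rewrites $2\,\mathfrak{Re}(e^{i\theta}AX)=(e^{i\theta}A)X+X(e^{i\theta}A)^*$, applies part (ii), and finishes with Lemma \ref{L1}. Your write-up is in fact more explicit about the role of self-adjointness of $X$ in this last step than the paper's own argument.
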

\begin{proof}
The first  inequality follows from Theorem \ref{main1} and the inequality
{\footnotesize\begin{align*}
&\mathbf{ber}^2(AX\pm XA^*)\\&\leqslant2\|A\|^2\Big(\mathbf{ber}(H^2)+\mathbf{ber}(K^2)+\sqrt{\left(\mathbf{ber}(H^2)-\mathbf{ber}(K^2)\right)^2}
\Big)\\&=2\|A\|^2\left(\mathbf{ber}(H^2)+\mathbf{ber}(K^2)+\left|\mathbf{ber}(H^2)-\mathbf{ber}(K^2)\right|\right)\\&
=4\|A\|^2\max\left(\mathbf{ber}(H^2),\mathbf{ber}(K^2)\right).
\end{align*}}
The second inequality follows from Theorem \ref{main1} and the hypotheses
$X=H+0i$. For the third inequality we have
\begin{align*}
\mathbf{ber}(AX)&=\underset{\theta \in
\mathbb{R}
}{\sup }\,\mathbf{ber}\left(\mathfrak{Re}(e^{i\theta}AX)\right)\\&=\frac{1}{2}\underset{\theta \in
\mathbb{R}
}{\sup }\,\mathbf{ber}\left(e^{i\theta}AX+e^{-i\theta}XA^*\right)\\&\leqslant\|A\|\mathbf{ber}^{\frac{1}{2}}(X^2)\qquad(\textrm{by part ({\rm ii})})
\end{align*}
as required.
\end{proof}
The following theorem gives some upper bounds for $\mathbf{ber}(AX\pm XA)$.
\begin{theorem}
Let $A,  X\in {\mathbb B}({\mathscr H}(\Omega))$.  Then
\begin{align*}
&({\rm i})\,\,\mathbf{ber}(AX\pm XA)\leqslant \mathbf{ber}^{\frac{1}{2}}\left(A^*A+AA^*\right)\mathbf{ber}^{\frac{1}{2}}\left(X^*X+XX^*\right).\\&
({\rm ii})\,\,\mathbf{ber}(AX\pm XA)\leqslant \mathbf{ber}^{\frac{1}{2}}\left(A^*A+X^*X\right)\mathbf{ber}^{\frac{1}{2}}\left(AA^*+XX^*\right).
\end{align*}
 \end{theorem}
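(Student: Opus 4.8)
The plan is to deduce both inequalities from one pointwise estimate and then to play off two different admissible pairings in the Cauchy--Schwarz inequality on $\mathbb{R}^2$. Let $\hat{k}_{\lambda}$ denote the normalized reproducing kernel of $\mathscr H(\Omega)$, so that $\mathbf{ber}(T)=\sup_{\lambda\in\Omega}|\langle T\hat{k}_{\lambda},\hat{k}_{\lambda}\rangle|$. Using $\langle AX\hat{k}_{\lambda},\hat{k}_{\lambda}\rangle=\langle X\hat{k}_{\lambda},A^*\hat{k}_{\lambda}\rangle$ and $\langle XA\hat{k}_{\lambda},\hat{k}_{\lambda}\rangle=\langle A\hat{k}_{\lambda},X^*\hat{k}_{\lambda}\rangle$, the triangle inequality and Cauchy--Schwarz give, for every $\lambda\in\Omega$,
\[
\bigl|\langle(AX\pm XA)\hat{k}_{\lambda},\hat{k}_{\lambda}\rangle\bigr|\leqslant\|X\hat{k}_{\lambda}\|\,\|A^*\hat{k}_{\lambda}\|+\|A\hat{k}_{\lambda}\|\,\|X^*\hat{k}_{\lambda}\|,
\]
and the sign of the commutator is irrelevant once absolute values are taken, so this single estimate handles both $\pm$ choices.

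For part (i) I would apply the elementary bound $a_1b_1+a_2b_2\leqslant(a_1^2+a_2^2)^{1/2}(b_1^2+b_2^2)^{1/2}$ with $(a_1,a_2)=(\|X\hat{k}_{\lambda}\|,\|X^*\hat{k}_{\lambda}\|)$ and $(b_1,b_2)=(\|A^*\hat{k}_{\lambda}\|,\|A\hat{k}_{\lambda}\|)$, obtaining the upper bound $\langle(X^*X+XX^*)\hat{k}_{\lambda},\hat{k}_{\lambda}\rangle^{1/2}\,\langle(A^*A+AA^*)\hat{k}_{\lambda},\hat{k}_{\lambda}\rangle^{1/2}$ after identifying $\|X\hat{k}_{\lambda}\|^2+\|X^*\hat{k}_{\lambda}\|^2=\langle(X^*X+XX^*)\hat{k}_{\lambda},\hat{k}_{\lambda}\rangle$ and similarly for $A$. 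Taking the supremum over $\lambda\in\Omega$ and using $\sup_{\lambda}\sqrt{f(\lambda)g(\lambda)}\leqslant(\sup_{\lambda}f)^{1/2}(\sup_{\lambda}g)^{1/2}$ yields (i). For part (ii) I would run the same argument but with the other grouping, $(a_1,a_2)=(\|X\hat{k}_{\lambda}\|,\|A\hat{k}_{\lambda}\|)$ and $(b_1,b_2)=(\|A^*\hat{k}_{\lambda}\|,\|X^*\hat{k}_{\lambda}\|)$, which instead produces $\langle(A^*A+X^*X)\hat{k}_{\lambda},\hat{k}_{\lambda}\rangle^{1/2}\,\langle(AA^*+XX^*)\hat{k}_{\lambda},\hat{k}_{\lambda}\rangle^{1/2}$; the same supremum step then gives (ii).

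There is no serious obstacle here: the whole point is to notice that the four quantities $\|A\hat{k}_{\lambda}\|,\|A^*\hat{k}_{\lambda}\|,\|X\hat{k}_{\lambda}\|,\|X^*\hat{k}_{\lambda}\|$ appearing in the pointwise estimate can be matched in the planar Cauchy--Schwarz inequality in exactly the two ways corresponding to (i) and (ii). The one detail to verify is that each of the operators $A^*A+AA^*$, $X^*X+XX^*$, $A^*A+X^*X$, $AA^*+XX^*$ is positive, so its Berezin symbol is nonnegative and one may legitimately pass from $\langle T\hat{k}_{\lambda},\hat{k}_{\lambda}\rangle$ to $\mathbf{ber}(T)$ inside the square roots; thus the bound on the right is genuinely $\mathbf{ber}^{1/2}(\cdot)\,\mathbf{ber}^{1/2}(\cdot)$. (Alternatively one could first pass to $\mathfrak{Re}(e^{i\theta}(AX\pm XA))$ and invoke Lemma \ref{L1}, but the direct computation above is shorter.)
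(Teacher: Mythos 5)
Your proposal is correct and follows essentially the same route as the paper: the pointwise bound $\|X\hat{k}_{\lambda}\|\|A^*\hat{k}_{\lambda}\|+\|A\hat{k}_{\lambda}\|\|X^*\hat{k}_{\lambda}\|$ via the triangle and Cauchy--Schwarz inequalities, followed by the two different planar Cauchy--Schwarz pairings for (i) and (ii) and a supremum over $\lambda\in\Omega$. (Your displayed version of the pairing for (ii) is in fact the corrected form of the paper's inequality, where the squares on the norms were evidently dropped by typographical error.)
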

\begin{proof}
Let $\hat{k}_{\lambda}$ be the normalized reproducing kernel of ${\mathscr H(\Omega)}$. Then
\begin{align*}
\left|\left\langle\left(AX\pm XA\right)\hat{k}_{\lambda},\hat{k}_{\lambda}\right\rangle\right|&
\leqslant\left|\left\langle AX\hat{k}_{\lambda},\hat{k}_{\lambda}\right\rangle\right|+\left|\left\langle XA\hat{k}_{\lambda},\hat{k}_{\lambda}\right\rangle\right|\\&
=\left|\left\langle X\hat{k}_{\lambda},A^*\hat{k}_{\lambda}\right\rangle\right|+\left|\left\langle A\hat{k}_{\lambda},X^*\hat{k}_{\lambda}\right\rangle\right|\\&
\leqslant\left\|X\hat{k}_{\lambda}\right\|\left\|A^*\hat{k}_{\lambda}\right\|+\left\|A\hat{k}_{\lambda}\right\|\left\|X^*\hat{k}_{\lambda}\right\|\\&
\leqslant\left(\left\|A\hat{k}_{\lambda}\right\|^2+\left\|A^*\hat{k}_{\lambda}\right\|^2\right)^{\frac{1}{2}}
\left(\left\|X\hat{k}_{\lambda}\right\|^2+\left\|X^*\hat{k}_{\lambda}\right\|^2\right)^{\frac{1}{2}}\\&
\qquad\qquad(\textrm {by the Cauchy-Schwartz inequality})\\&
=\left|\left\langle \left(A^*A+AA^*\right)\hat{k}_{\lambda},\hat{k}_{\lambda}\right\rangle\right|^{\frac{1}{2}}
\left|\left\langle \left(X^*X+XX^*\right)\hat{k}_{\lambda},\hat{k}_{\lambda}\right\rangle\right|^{\frac{1}{2}}\\&
\leqslant\mathbf{ber}^{\frac{1}{2}}(A^*A+AA^*)\mathbf{ber}^{\frac{1}{2}}(X^*X+XX^*).
\end{align*}
Hence
\begin{align*}
\mathbf{ber}(AX\pm XA)&=\underset{\lambda \in
\Omega
}{\sup }\left|\left\langle(AX\pm XA)\hat{k}_{\lambda},\hat{k}_{\lambda}\right\rangle\right|
\\&\leqslant\mathbf{ber}^{\frac{1}{2}}(A^*A+AA^*)\mathbf{ber}^{\frac{1}{2}}(X^*X+XX^*).
\end{align*}
Now, according to the inequality
\begin{align*}
&\left\|X\hat{k}_{\lambda}\right\|\left\|A^*\hat{k}_{\lambda}\right\|+\left\|A\hat{k}_{\lambda}\right\|\left\|X^*\hat{k}_{\lambda}\right\|
\leqslant\left(\left\|A\hat{k}_{\lambda}\right\|+\left\|X\hat{k}_{\lambda}\right\|\right)^{\frac{1}{2}}
\left(\left\|A^*\hat{k}_{\lambda}\right\|+\left\|X^*\hat{k}_{\lambda}\right\|\right)^{\frac{1}{2}}\\&
\qquad\qquad\qquad\qquad\qquad\qquad(\textrm {by the Cauchy-Schwartz inequality})
\end{align*}
and a similar argument of the proof of part ({\rm i}) we get the second inequality.
\end{proof}
For the special case $A=I$, we have the next result.
\begin{corollary}\label{main3}
Let $X\in {\mathbb B}({\mathscr H}(\Omega))$. Then
\begin{align*}
&({\rm i})\,\,\mathbf{ber}^2(X)\leqslant \mathbf{ber}\left(I+X^*X\right)\mathbf{ber}\left(I+XX^*\right).\\&
({\rm ii})\,\,\mathbf{ber}^2(X)\leqslant\frac{1}{2} \mathbf{ber}\left(X^*X+XX^*\right).
\end{align*}
\end{corollary}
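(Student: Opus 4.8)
The plan is to specialize the theorem immediately preceding this corollary to the case $A = I$, since both inequalities are essentially what that theorem delivers when $A = I$. First I would substitute $A = I$ into part (i) of that theorem, noting $A^*A = AA^* = I$, so $A^*A + AA^* = 2I$; since $\mathbf{ber}(2I) = 2$, part (i) of the theorem reads $\mathbf{ber}(IX \pm XI) \leqslant \sqrt{2}\,\mathbf{ber}^{\frac12}(X^*X + XX^*)$. With the choice of the plus sign this is $\mathbf{ber}(2X) = 2\mathbf{ber}(X)$ on the left, by property (ii) in \eqref{prop}. Squaring both sides gives $4\mathbf{ber}^2(X) \leqslant 2\,\mathbf{ber}(X^*X+XX^*)$, i.e. $\mathbf{ber}^2(X) \leqslant \tfrac12\mathbf{ber}(X^*X+XX^*)$, which is exactly part (ii) of the corollary.

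For part (i) of the corollary I would instead substitute $A = I$ into part (ii) of the preceding theorem. There $A^*A + X^*X = I + X^*X$ and $AA^* + XX^* = I + XX^*$, so part (ii) of the theorem yields $\mathbf{ber}(IX \pm XI) \leqslant \mathbf{ber}^{\frac12}(I + X^*X)\,\mathbf{ber}^{\frac12}(I + XX^*)$. Taking again the plus sign, the left side is $2\mathbf{ber}(X)$; squaring gives $4\mathbf{ber}^2(X) \leqslant \mathbf{ber}(I+X^*X)\,\mathbf{ber}(I+XX^*)$. To finish I would need to absorb the factor $4$ — but this follows because $\mathbf{ber}(I + X^*X) = 1 + \mathbf{ber}(X^*X) \geqslant 2\,\mathbf{ber}^{\frac12}(X^*X)$ and similarly for the conjugate term, which would give a cleaner bound; however, for the stated inequality one simply keeps the factor as is, or observes that $\mathbf{ber}^2(X)\le\tfrac14\mathbf{ber}(I+X^*X)\mathbf{ber}(I+XX^*)\le\mathbf{ber}(I+X^*X)\mathbf{ber}(I+XX^*)$. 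So part (i) as stated follows a fortiori.

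There is no genuine obstacle here: the corollary is a direct corollary in the literal sense, obtained by the substitution $A = I$ and the elementary identity $\mathbf{ber}(2X) = 2\mathbf{ber}(X)$. The only point requiring a moment's care is the factor-of-$4$ bookkeeping when passing from the squared inequality $4\mathbf{ber}^2(X) \leqslant (\cdots)$ to the stated forms; in part (ii) the constant $\tfrac12$ emerges exactly, while in part (i) the stated inequality is weaker than what the computation actually gives, so it holds trivially. I would write the proof as two short displays, one for each part, citing the preceding theorem and property \eqref{prop}(ii).

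\begin{proof}
Apply part (ii) of the preceding theorem with $A = I$. Since $I^*I = II^* = I$, we obtain
\begin{align*}
2\mathbf{ber}(X) = \mathbf{ber}(IX + XI) \leqslant \mathbf{ber}^{\frac12}\left(I + X^*X\right)\mathbf{ber}^{\frac12}\left(I + XX^*\right),
\end{align*}
where we used property \eqref{prop}(ii). Squaring yields $4\mathbf{ber}^2(X) \leqslant \mathbf{ber}(I+X^*X)\mathbf{ber}(I+XX^*)$, which gives (i) since $4 \geqslant 1$.

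For (ii), apply part (i) of the preceding theorem with $A = I$. Then $A^*A + AA^* = 2I$, so $\mathbf{ber}(A^*A+AA^*) = 2$, and hence
\begin{align*}
2\mathbf{ber}(X) = \mathbf{ber}(IX + XI) \leqslant \sqrt{2}\,\mathbf{ber}^{\frac12}\left(X^*X + XX^*\right).
\end{align*}
Squaring gives $4\mathbf{ber}^2(X) \leqslant 2\,\mathbf{ber}(X^*X+XX^*)$, that is, $\mathbf{ber}^2(X) \leqslant \frac12\mathbf{ber}(X^*X+XX^*)$, as required.
\end{proof}
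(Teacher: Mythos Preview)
Your proposal is correct and follows exactly the paper's approach: the paper simply states that the corollary is the special case $A=I$ of the preceding theorem, and your computation spells this out, with theorem part (i) yielding corollary part (ii) and theorem part (ii) yielding corollary part (i). Your observation that the substitution actually gives the sharper bound $4\mathbf{ber}^2(X)\leqslant\mathbf{ber}(I+X^*X)\mathbf{ber}(I+XX^*)$ for part (i) is correct and is implicit in the paper's terse derivation.
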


\begin{remark}
Corollary \ref{main3}$({\rm ii})$ is an improvement of \eqref{prop}. To see this,  note that
\begin{align*}
\mathbf{ber}^2(X)&\leqslant\frac{1}{2}\mathbf{ber}(X^*X+XX^*)\\&\leqslant\frac{\mathbf{ber}(X^*X)+\mathbf{ber}(XX^*)}{2}\\&\leqslant
\frac{\|X^*X\|+\|XX^*\|}{2}\\&=\|X\|^2.
\end{align*}
\end{remark}
In the following theorem, we present some upper bounds of $\mathbf{ber}(A^*XB)$. To achieve this propose, we need the next lemma; see \cite{KIT}.
\begin{lemma}\label{lemma2}
If $X\in {\mathbb B}({\mathscr H})$ and $x, y\in{\mathscr H}$, then $\left|\left\langle Xx,y\right\rangle\right|^2\leqslant\left\langle|X|x,x\right\rangle\left\langle|X^*|y,y\right\rangle$, in which $|X|=\left(X^*X\right)^{1\over2}$.
\end{lemma}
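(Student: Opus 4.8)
The statement to prove is Lemma~\ref{lemma2}, the mixed Cauchy--Schwarz (or Kittaneh-type) inequality $\left|\left\langle Xx,y\right\rangle\right|^2\leqslant\left\langle|X|x,x\right\rangle\left\langle|X^*|y,y\right\rangle$. The plan is to exploit the polar decomposition $X=U|X|$, where $U$ is a partial isometry with $\ker U=\ker|X|$ and $|X^*|=U|X|U^*$. First I would write $X|X|^{-?}$ --- more carefully, split the modulus symmetrically: since $|X|$ is positive, write $|X|=|X|^{1/2}|X|^{1/2}$, so that $\langle Xx,y\rangle=\langle U|X|^{1/2}\,|X|^{1/2}x,y\rangle=\langle |X|^{1/2}x,\;|X|^{1/2}U^*y\rangle$, using that $U|X|^{1/2}$ has adjoint $|X|^{1/2}U^*$ on the relevant subspace.

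Then the ordinary Cauchy--Schwarz inequality in $\mathscr H$ gives
\begin{align*}
\left|\left\langle Xx,y\right\rangle\right|^2
=\left|\left\langle |X|^{1/2}x,\;|X|^{1/2}U^*y\right\rangle\right|^2
\leqslant\left\||X|^{1/2}x\right\|^2\,\left\||X|^{1/2}U^*y\right\|^2.
\end{align*}
The first factor is exactly $\left\langle|X|x,x\right\rangle$. For the second factor, I would compute $\left\||X|^{1/2}U^*y\right\|^2=\left\langle |X|^{1/2}U^*y,|X|^{1/2}U^*y\right\rangle=\left\langle U|X|U^*y,y\right\rangle$, and here invoke the identity $|X^*|=U|X|U^*$, which follows from $XX^*=U|X|^2U^*=(U|X|U^*)^2$ together with uniqueness of the positive square root and $U|X|U^*\geqslant0$. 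This yields $\left\||X|^{1/2}U^*y\right\|^2=\left\langle|X^*|y,y\right\rangle$, and multiplying the two bounds completes the proof.

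The main obstacle is purely bookkeeping around the partial isometry: one must be careful that the adjoint manipulation $\langle U|X|^{1/2}v,y\rangle=\langle v,|X|^{1/2}U^*y\rangle$ is legitimate (it is, since $(U|X|^{1/2})^*=|X|^{1/2}U^*$ as bounded operators), and that the square-root identity $|X^*|=U|X|U^*$ is correctly justified via uniqueness of positive square roots rather than merely asserted. An alternative route avoiding polar decomposition is to prove the two-variable operator inequality $\left|\langle Xx,y\rangle\right|^2\leqslant\langle|X|x,x\rangle\langle|X^*|y,y\rangle$ by reducing to the scalar inequality $|ab|\le\frac12(t|a|^2+t^{-1}|b|^2)$ optimized over $t>0$ after writing $X$ in a $2\times2$ block form, but the polar-decomposition argument above is the cleanest. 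Since the lemma is attributed to \cite{KIT}, I would in any case present the short polar-decomposition proof for completeness.
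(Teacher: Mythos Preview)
Your proof is correct: the polar decomposition $X=U|X|$, the splitting $|X|=|X|^{1/2}|X|^{1/2}$, the Cauchy--Schwarz step, and the identification $U|X|U^*=|X^*|$ via uniqueness of the positive square root are all carried out properly. Note, however, that the paper does not actually prove this lemma; it simply states it and refers to \cite{KIT} for the argument, so there is no in-paper proof to compare against --- your self-contained polar-decomposition argument is exactly the standard one found in Kittaneh's original paper and would serve well as the omitted justification.
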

\begin{theorem}\label{main2}
Let $A, B, X\in {\mathbb B}({\mathscr H}(\Omega))$.  Then
\begin{align*}
&({\rm i})\,\,\mathbf{ber}^2(A^*XB)\leqslant\|X\|^2\mathbf{ber}(A^*A)\mathbf{ber}(B^*B).\\&
({\rm ii})\,\,\mathbf{ber}(A^*XB)\leqslant\frac{1}{2}\mathbf{ber}\left(B^*|X|B+A^*|X^*|A\right).
\end{align*}
\end{theorem}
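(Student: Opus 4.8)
The plan is to bound the Berezin symbol of $A^*XB$ pointwise on the normalized reproducing kernels and then pass to the supremum over $\Omega$. Write $\hat{k}_{\lambda}$ for the normalized reproducing kernel of ${\mathscr H}(\Omega)$ at $\lambda$, and note that for every $\lambda\in\Omega$
\[
\left\langle A^*XB\hat{k}_{\lambda},\hat{k}_{\lambda}\right\rangle=\left\langle XB\hat{k}_{\lambda},A\hat{k}_{\lambda}\right\rangle .
\]

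For part $({\rm i})$ I would apply the Cauchy--Schwarz inequality together with $\|Xu\|\leqslant\|X\|\,\|u\|$ to obtain
\[
\left|\left\langle XB\hat{k}_{\lambda},A\hat{k}_{\lambda}\right\rangle\right|\leqslant\|X\|\,\left\|B\hat{k}_{\lambda}\right\|\,\left\|A\hat{k}_{\lambda}\right\|,
\]
and then use $\left\|B\hat{k}_{\lambda}\right\|^2=\left\langle B^*B\hat{k}_{\lambda},\hat{k}_{\lambda}\right\rangle$ and $\left\|A\hat{k}_{\lambda}\right\|^2=\left\langle A^*A\hat{k}_{\lambda},\hat{k}_{\lambda}\right\rangle$. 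Since $A^*A$ and $B^*B$ are positive, these two inner products are nonnegative and are bounded above by $\mathbf{ber}(A^*A)$ and $\mathbf{ber}(B^*B)$ respectively; squaring and taking the supremum over $\lambda\in\Omega$ yields $({\rm i})$.

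For part $({\rm ii})$ the idea is to feed $x=B\hat{k}_{\lambda}$ and $y=A\hat{k}_{\lambda}$ into Lemma \ref{lemma2}, which gives
\[
\left|\left\langle XB\hat{k}_{\lambda},A\hat{k}_{\lambda}\right\rangle\right|^2\leqslant\left\langle|X|B\hat{k}_{\lambda},B\hat{k}_{\lambda}\right\rangle\left\langle|X^*|A\hat{k}_{\lambda},A\hat{k}_{\lambda}\right\rangle=\left\langle B^*|X|B\hat{k}_{\lambda},\hat{k}_{\lambda}\right\rangle\left\langle A^*|X^*|A\hat{k}_{\lambda},\hat{k}_{\lambda}\right\rangle .
\]
Both factors on the right are nonnegative (the operators $B^*|X|B$ and $A^*|X^*|A$ are positive), so the arithmetic--geometric mean inequality gives
\[
\left|\left\langle A^*XB\hat{k}_{\lambda},\hat{k}_{\lambda}\right\rangle\right|\leqslant\frac{1}{2}\Big(\left\langle B^*|X|B\hat{k}_{\lambda},\hat{k}_{\lambda}\right\rangle+\left\langle A^*|X^*|A\hat{k}_{\lambda},\hat{k}_{\lambda}\right\rangle\Big)=\frac{1}{2}\left\langle\big(B^*|X|B+A^*|X^*|A\big)\hat{k}_{\lambda},\hat{k}_{\lambda}\right\rangle .
\]
Taking the supremum over $\lambda\in\Omega$ and using that $B^*|X|B+A^*|X^*|A$ is positive (so that its Berezin number equals the supremum of the right-hand inner products) completes $({\rm ii})$.

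I do not expect any serious obstacle here; the only points needing a little care are the positivity remarks that allow one to drop the absolute values and replace $\sup_{\lambda}\left\langle\,\cdot\,\hat{k}_{\lambda},\hat{k}_{\lambda}\right\rangle$ by $\mathbf{ber}(\cdot)$, and the correct identification $\left\langle|X|B\hat{k}_{\lambda},B\hat{k}_{\lambda}\right\rangle=\left\langle B^*|X|B\hat{k}_{\lambda},\hat{k}_{\lambda}\right\rangle$ after substituting into Lemma \ref{lemma2}. One could alternatively deduce $({\rm i})$ from $({\rm ii})$ via $|X|\leqslant\|X\|I$ and $|X^*|\leqslant\|X\|I$, but the direct argument above is shorter, so I would present it on its own.
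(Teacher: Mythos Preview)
Your proposal is correct and follows essentially the same route as the paper: for $({\rm i})$ Cauchy--Schwarz plus $\|Xu\|\leqslant\|X\|\|u\|$ and the identifications $\|B\hat k_\lambda\|^2=\langle B^*B\hat k_\lambda,\hat k_\lambda\rangle$, $\|A\hat k_\lambda\|^2=\langle A^*A\hat k_\lambda,\hat k_\lambda\rangle$; for $({\rm ii})$ Lemma~\ref{lemma2} with $x=B\hat k_\lambda$, $y=A\hat k_\lambda$ followed by AM--GM. Your extra care about positivity (so that $\sup_\lambda\langle\,\cdot\,\hat k_\lambda,\hat k_\lambda\rangle=\mathbf{ber}(\cdot)$) is a nice point the paper leaves implicit.
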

\begin{proof}
If  $\hat{k}_{\lambda}$ is the normalized reproducing kernel of ${\mathscr H(\Omega)}$, then
\begin{align*}
\left|\left\langle A^*XB\hat{k}_{\lambda},\hat{k}_{\lambda}\right\rangle\right|^2
&=\left|\left\langle XB\hat{k}_{\lambda},A\hat{k}_{\lambda}\right\rangle\right|^2
\\&\leqslant\left\|XB\hat{k}_{\lambda}\right\|^2\left\|A\hat{k}_{\lambda}\right\|^2
\\&\leqslant\|X\|^2\left\|B\hat{k}_{\lambda}\right\|^2\left\|A\hat{k}_{\lambda}\right\|^2
\\&\leqslant\|X\|^2\left\langle B\hat{k}_{\lambda},B\hat{k}_{\lambda}\right\rangle\left\langle A\hat{k}_{\lambda},A\hat{k}_{\lambda}\right\rangle
\\&=\|X\|^2\left\langle B^*B\hat{k}_{\lambda},\hat{k}_{\lambda}\right\rangle\left\langle A^*A\hat{k}_{\lambda},\hat{k}_{\lambda}\right\rangle
\\&\leqslant\|X\|^2 \mathbf{ber}(A^*A)\mathbf{ber}(B^*B),
\end{align*}
whence  $\mathbf{ber}^2(A^*XB)=\underset{\lambda \in
\Omega
}{\sup }\left|\left\langle A^*XB\hat{k}_{\lambda},\hat{k}_{\lambda}\right\rangle\right|^2\leqslant\|X\|^2\mathbf{ber}(A^*A)\mathbf{ber}(B^*B)$, and so we get the first inequality.
Also, we have
\begin{align*}
\left|\left\langle A^*XB\hat{k}_{\lambda},\hat{k}_{\lambda}\right\rangle\right|
&=\left|\left\langle XB\hat{k}_{\lambda},A\hat{k}_{\lambda}\right\rangle\right|
\\&\leqslant\left\langle |X|B\hat{k}_{\lambda},B\hat{k}_{\lambda}\right\rangle^{\frac{1}{2}}\left\langle |X^*|A\hat{k}_{\lambda},A\hat{k}_{\lambda}\right\rangle^{\frac{1}{2}}\qquad(\textrm{by Lemma \ref{lemma2}})
\\&\leqslant\frac{1}{2}\left(\left\langle \left(B^*|X|B\right)\hat{k}_{\lambda},\hat{k}_{\lambda}\right\rangle+\left\langle \left(A^*|X^*|A\right)\hat{k}_{\lambda},\hat{k}_{\lambda}\right\rangle\right)
 \\&\qquad(\textrm {by the convexity\,} f(t)=t^2)
\\&=\frac{1}{2}\left(\left\langle \left(B^*|X|B+A^*|X^*|A\right)\hat{k}_{\lambda},\hat{k}_{\lambda}\right\rangle\right)
\\&\leqslant\frac{1}{2}\mathbf{ber}\left( B^*|X|B+A^*|X^*|A\right).
\end{align*}
Hence
\begin{align*}
\mathbf{ber}(A^*XB)&=\underset{\lambda \in
\Omega
}{\sup }\left|\left\langle A^*XB\hat{k}_{\lambda},\hat{k}_{\lambda}\right\rangle\right|
\\&\leqslant\frac{1}{2}\mathbf{ber}\left( B^*|X|B+A^*|X^*|A\right).
\end{align*}
\end{proof}
In the special case of Theorem \ref{main2}, for $X=I$ we obtain the next result.
\begin{corollary}
Let $A, B, X\in {\mathbb B}({\mathscr H}(\Omega))$.  Then
\begin{align*}
&({\rm i})\,\,\mathbf{ber}^2(A^*B)\leqslant\mathbf{ber}(A^*A)\mathbf{ber}(B^*B).\\&
({\rm ii})\,\,\mathbf{ber}(A^*B)\leqslant\frac{1}{2}\mathbf{ber}(A^*A+B^*B).
\end{align*}
\end{corollary}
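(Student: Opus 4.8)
The plan is to read off both inequalities as the specialization $X=I$ of Theorem \ref{main2}. For part $({\rm i})$, the bound $\mathbf{ber}^2(A^*XB)\leqslant\|X\|^2\mathbf{ber}(A^*A)\mathbf{ber}(B^*B)$ with $X=I$ and $\|I\|=1$ collapses at once to $\mathbf{ber}^2(A^*B)\leqslant\mathbf{ber}(A^*A)\mathbf{ber}(B^*B)$. For part $({\rm ii})$, I would first record that $|I|=(I^*I)^{1/2}=I$ and likewise $|I^*|=I$, so the operator $B^*|X|B+A^*|X^*|A$ occurring in Theorem \ref{main2}$({\rm ii})$ becomes $B^*B+A^*A$; reordering the two summands gives $\mathbf{ber}(A^*B)\leqslant\frac{1}{2}\mathbf{ber}(A^*A+B^*B)$.

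If a self-contained argument is preferred, one can mimic the proof of Theorem \ref{main2} directly. Writing $\langle A^*B\hat{k}_{\lambda},\hat{k}_{\lambda}\rangle=\langle B\hat{k}_{\lambda},A\hat{k}_{\lambda}\rangle$, the Cauchy--Schwarz inequality yields $|\langle A^*B\hat{k}_{\lambda},\hat{k}_{\lambda}\rangle|\leqslant\|B\hat{k}_{\lambda}\|\,\|A\hat{k}_{\lambda}\|=\langle B^*B\hat{k}_{\lambda},\hat{k}_{\lambda}\rangle^{1/2}\langle A^*A\hat{k}_{\lambda},\hat{k}_{\lambda}\rangle^{1/2}$. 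Squaring, bounding each inner product by the corresponding Berezin number, and taking the supremum over $\lambda\in\Omega$ gives $({\rm i})$; applying instead the arithmetic--geometric mean inequality $\|B\hat{k}_{\lambda}\|\,\|A\hat{k}_{\lambda}\|\leqslant\frac{1}{2}\big(\|B\hat{k}_{\lambda}\|^2+\|A\hat{k}_{\lambda}\|^2\big)=\frac{1}{2}\langle(A^*A+B^*B)\hat{k}_{\lambda},\hat{k}_{\lambda}\rangle$ and again passing to the supremum gives $({\rm ii})$.

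There is essentially no obstacle: the statement is a direct corollary of Theorem \ref{main2}, and the only point needing a moment's attention is the identification $|I|=|I^*|=I$ invoked in part $({\rm ii})$. As a consistency check one may note that putting $A=I$ in $({\rm i})$ recovers the familiar estimate $\mathbf{ber}^2(B)\leqslant\mathbf{ber}(B^*B)$, which refines $\mathbf{ber}(B)\leqslant\|B\|$ from \eqref{prop}.
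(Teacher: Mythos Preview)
Your proposal is correct and matches the paper's approach exactly: the paper simply states that the corollary is the special case $X=I$ of Theorem~\ref{main2}, which is precisely your primary argument. The self-contained Cauchy--Schwarz/AM--GM derivation you add is also valid and is just the $X=I$ instance of the proof of Theorem~\ref{main2}.
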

\begin{corollary}
Let $A, B, X\in {\mathbb B}({\mathscr H}(\Omega))$.  Then
\begin{align*}
&({\rm i})\,\,\mathbf{ber}(A^*XB)\leqslant\mathbf{ber}^{\frac{1}{2}}\left(B^*|X|B\right)\mathbf{ber}^{\frac{1}{2}}\left(A^*|X^*|A\right).\\&
({\rm ii})\,\,\mathbf{ber}(A^*XB)\leqslant\frac{1}{2}\mathbf{ber}\left(\frac{\|B\|}{\|A\|}B^*|X|B+\frac{\|A\|}{\|B\|}A^*|X^*|A\right).
\end{align*}
\end{corollary}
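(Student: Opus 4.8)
The plan is to obtain both inequalities from Lemma \ref{lemma2}, reusing the estimate already established inside the proof of Theorem \ref{main2}(ii). Let $\hat{k}_{\lambda}$ be the normalized reproducing kernel of $\mathscr{H}(\Omega)$. Writing $\langle A^*XB\hat{k}_{\lambda},\hat{k}_{\lambda}\rangle=\langle XB\hat{k}_{\lambda},A\hat{k}_{\lambda}\rangle$ and applying Lemma \ref{lemma2} with $x=B\hat{k}_{\lambda}$, $y=A\hat{k}_{\lambda}$ gives
\[
\left|\left\langle A^*XB\hat{k}_{\lambda},\hat{k}_{\lambda}\right\rangle\right|\leqslant\left\langle B^*|X|B\hat{k}_{\lambda},\hat{k}_{\lambda}\right\rangle^{\frac12}\left\langle A^*|X^*|A\hat{k}_{\lambda},\hat{k}_{\lambda}\right\rangle^{\frac12}.
\]
For part (i) I would bound each of the two factors on the right by the respective Berezin number and then take the supremum over $\lambda\in\Omega$; this is exactly the argument in Theorem \ref{main2} with the concluding arithmetic--geometric mean step left out.

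For part (ii) I would instead apply the weighted arithmetic--geometric mean inequality $st\leqslant\tfrac12\bigl(\alpha s^{2}+\alpha^{-1}t^{2}\bigr)$, valid for $s,t\geqslant0$ and $\alpha>0$, with $s=\langle B^*|X|B\hat{k}_{\lambda},\hat{k}_{\lambda}\rangle^{1/2}$, $t=\langle A^*|X^*|A\hat{k}_{\lambda},\hat{k}_{\lambda}\rangle^{1/2}$ and the particular choice $\alpha=\|B\|/\|A\|$, which turns the right-hand side above into $\tfrac12\langle(\frac{\|B\|}{\|A\|}B^*|X|B+\frac{\|A\|}{\|B\|}A^*|X^*|A)\hat{k}_{\lambda},\hat{k}_{\lambda}\rangle$, and then take the supremum over $\lambda$. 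Alternatively, and perhaps more transparently, part (ii) follows directly from Theorem \ref{main2}(ii) applied to the triple $(cA,\,X,\,c^{-1}B)$: since $(cA)^*X(c^{-1}B)=A^*XB$ while its right-hand side becomes $\tfrac12\mathbf{ber}\bigl(c^{-2}B^*|X|B+c^{2}A^*|X^*|A\bigr)$, the choice $c^{2}=\|A\|/\|B\|$ produces precisely the claimed bound.

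I do not anticipate any genuine obstacle here; the content beyond Theorem \ref{main2} is only the free scaling parameter supplied by the weighted mean inequality. The one point that requires a word is the degenerate case $\|A\|=0$ or $\|B\|=0$, in which $A^*XB=0$, hence $\mathbf{ber}(A^*XB)=0$ and both inequalities hold trivially, so one may assume $\|A\|,\|B\|>0$ and the weight $\alpha=\|B\|/\|A\|$ (equivalently the scalar $c$) is well defined.
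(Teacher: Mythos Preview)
Your proposal is correct. For part (ii) your ``alternative'' scaling argument (apply Theorem \ref{main2}(ii) to $(cA,X,c^{-1}B)$ with $c^{2}=\|A\|/\|B\|$) is exactly what the paper does.

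For part (i) your route is actually shorter than the paper's. The paper does not bound the two factors in the pointwise estimate separately; instead it starts from the inequality of Theorem \ref{main2}(ii), uses subadditivity of $\mathbf{ber}$ to split the right-hand side into $\tfrac12\bigl(\mathbf{ber}(B^*|X|B)+\mathbf{ber}(A^*|X^*|A)\bigr)$, then rescales $A\mapsto tA$, $B\mapsto t^{-1}B$ and minimizes over $t>0$ to recover the geometric mean. Your approach bypasses this detour: having the pointwise product bound from Lemma \ref{lemma2} in hand, each factor is already dominated by the corresponding Berezin number, so the product form survives to the supremum without any optimization. The paper's method has the mild advantage that it proceeds purely from the statement of Theorem \ref{main2}(ii) without reopening its proof, but yours is the more economical argument.
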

\begin{proof}
By  Theorem \ref{main2}$({\rm ii})$, we have
\begin{align}\label{eqution9}
\mathbf{ber}(A^*XB)&\leqslant\frac{1}{2}\mathbf{ber}\left(B^*|X|B+A^*|X^*|A\right)\nonumber\\&
\leqslant\frac{1}{2}\Big(\mathbf{ber}\left(B^*|X|B\right)+\mathbf{ber}\left(A^*|X^*|A\right)\Big).
\end{align}
Now, if we replace $A$ and $B$ by $tA$ and $\frac{1}{t} B\,\,(t>0)$ in inequality \eqref{eqution9}, respectively, then we get
\begin{align*}
\mathbf{ber}(A^*XB)\leqslant\frac{1}{2}\left(\frac{1}{t^2}\mathbf{ber}\left(B^*|X|B\right)+t^2\mathbf{ber}\left(A^*|X^*|A\right)\right).
\end{align*}
It follows from $$\min_{t>0}\left(\frac{1}{t^2}\mathbf{ber}\left(B^*|X|B\right)+t^2\mathbf{ber}\left(A^*|X^*|A\right)\right)
=2\mathbf{ber}^{\frac{1}{2}}\left(B^*|X|B\right)\mathbf{ber}^{\frac{1}{2}}\left(A^*|X^*|A\right)$$
that we get the first inequality. Moreover, if we replace $A$ and $B$ by
$\sqrt{\frac{\|A\|}{\|B\|}}A$ and $\sqrt{\frac{\|B\|}{\|A\|}}B$ Theorem \ref{main2}$({\rm ii})$, respectively,
 we reach the second inequality.
 \end{proof}
 Using Theorem \ref{main2}, we demonstrate some upper bounds for  $\mathbf{ber}(A^*XB+B^*YA)$.
\begin{theorem}\label{main4}
Let $A, B, X, Y\in {\mathbb B}({\mathscr H}(\Omega))$.  Then
\begin{align*}
&({\rm i})\,\,\mathbf{ber}(A^*XB+B^*YA)\leqslant\sqrt{2}\left\|\,|X|+|Y^\ast|\,\right\|\mathbf{ber}^{\frac{1}{2}}\left(B^*B\right)\mathbf{ber}^{\frac{1}{2}}\left(AA^*\right).\\&
({\rm ii})\,\,\mathbf{ber}(A^*XB+B^*YA)\leqslant2\sqrt{\|X\|\|Y\|}\mathbf{ber}^{\frac{1}{2}}\left(B^*B\right)\mathbf{ber}^{\frac{1}{2}}\left(AA^*\right).
\end{align*}
\end{theorem}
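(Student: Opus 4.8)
The plan is to split the operator into its two summands $A^*XB$ and $B^*YA$, feed each into Theorem~\ref{main2}(ii), and then recombine. By \eqref{prop}(iii) and Theorem~\ref{main2}(ii) (for the second summand with $A$, $B$, $X$ playing the roles of $B$, $A$, $Y$),
\[
\mathbf{ber}(A^*XB+B^*YA)\leqslant\mathbf{ber}(A^*XB)+\mathbf{ber}(B^*YA)\leqslant\frac{1}{2}\mathbf{ber}\!\left(B^*|X|B+A^*|X^*|A\right)+\frac{1}{2}\mathbf{ber}\!\left(A^*|Y|A+B^*|Y^*|B\right).
\]
Applying \eqref{prop}(iii) once more and regrouping the terms carrying $B$ and those carrying $A$, the right-hand side is at most
\[
\frac{1}{2}\Big(\mathbf{ber}(B^*|X|B)+\mathbf{ber}(B^*|Y^*|B)\Big)+\frac{1}{2}\Big(\mathbf{ber}(A^*|X^*|A)+\mathbf{ber}(A^*|Y|A)\Big).
\]

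To control each group I would use two elementary facts about the Berezin number of positive operators: $\mathbf{ber}(S_1)+\mathbf{ber}(S_2)\leqslant2\,\mathbf{ber}(S_1+S_2)$ when $S_1,S_2\geqslant0$ (since then $\langle S_i\hat k_\lambda,\hat k_\lambda\rangle\leqslant\langle(S_1+S_2)\hat k_\lambda,\hat k_\lambda\rangle$ for each $i$), and $\mathbf{ber}(B^*SB)\leqslant\|S\|\,\mathbf{ber}(B^*B)$ for $S\geqslant0$ (because $\langle SB\hat k_\lambda,B\hat k_\lambda\rangle\leqslant\|S\|\,\|B\hat k_\lambda\|^2=\|S\|\langle B^*B\hat k_\lambda,\hat k_\lambda\rangle$). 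These turn the $B$-group into a constant times $\mathbf{ber}(B^*B)$ and the $A$-group into a constant times $\mathbf{ber}(A^*A)$: for part (i) the constants are the operator norms $\|\,|X|+|Y^*|\,\|$ and $\|\,|X^*|+|Y|\,\|$, while for part (ii) one uses instead the cruder $\|X\|+\|Y\|$ obtained by bounding each summand separately via $\|\,|X|\,\|=\|X\|$. One is then left with a bound of the shape $\alpha\,\mathbf{ber}(B^*B)+\beta\,\mathbf{ber}(A^*A)$; replacing $A$ by $tA$ and $B$ by $t^{-1}B$ leaves $A^*XB+B^*YA$ (hence the left-hand side) unchanged while sending $\mathbf{ber}(A^*A)\mapsto t^2\mathbf{ber}(A^*A)$ and $\mathbf{ber}(B^*B)\mapsto t^{-2}\mathbf{ber}(B^*B)$, so minimizing $\alpha t^{-2}+\beta t^2$ over $t>0$ converts the sum into $2\sqrt{\alpha\beta}\,\mathbf{ber}^{\frac{1}{2}}(B^*B)\,\mathbf{ber}^{\frac{1}{2}}(A^*A)$, which is the product form in the statement.

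A parallel route bypasses Theorem~\ref{main2} and works directly on the normalized reproducing kernels: since $\langle(A^*XB+B^*YA)\hat k_\lambda,\hat k_\lambda\rangle=\langle XB\hat k_\lambda,A\hat k_\lambda\rangle+\langle YA\hat k_\lambda,B\hat k_\lambda\rangle$, one bounds each inner product by the mixed Schwarz inequality of Lemma~\ref{lemma2} and then applies the scalar Cauchy--Schwarz inequality $\sqrt{a_1b_1}+\sqrt{a_2b_2}\leqslant\sqrt{(a_1+a_2)(b_1+b_2)}$, after grouping so that $\langle(|X|+|Y^*|)B\hat k_\lambda,B\hat k_\lambda\rangle$ sits in one factor and $\langle(|X^*|+|Y|)A\hat k_\lambda,A\hat k_\lambda\rangle$ in the other, finishing with $\langle Sv,v\rangle\leqslant\|S\|\,\|v\|^2$ and the supremum over $\lambda\in\Omega$. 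I expect the only genuine difficulty to be the bookkeeping in this recombination: the four square-root factors produced by Lemma~\ref{lemma2} must be paired in exactly the right way so that, after Cauchy--Schwarz, the estimate factors cleanly into an $A$-dependent piece times a $B$-dependent piece and the operator norms emerge as stated; once that is arranged, the scaling optimization yielding the constants $2\sqrt{\|X\|\|Y\|}$ and $\sqrt{2}\,\|\,|X|+|Y^*|\,\|$ is routine.
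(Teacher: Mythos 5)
Your part (ii) has a genuine gap, and it sits exactly in the step you dismiss as routine. After either of your recombinations you hold a bound of the form $\alpha\,\mathbf{ber}(B^*B)+\beta\,\mathbf{ber}(A^*A)$ in which $\alpha,\beta$ are fixed operator norms built from $X$ and $Y$ (at best $\tfrac12(\|X\|+\|Y\|)$ each). The scaling $A\mapsto tA$, $B\mapsto t^{-1}B$ does leave $A^*XB+B^*YA$ invariant, but it only trades $\mathbf{ber}(A^*A)$ against $\mathbf{ber}(B^*B)$ and never touches $\|X\|$ or $\|Y\|$; optimizing over $t$ therefore yields the constant $\|X\|+\|Y\|$, the arithmetic mean, and nothing in your setup can convert it into $2\sqrt{\|X\|\|Y\|}$. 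Scaling $X\mapsto tX$, $Y\mapsto t^{-1}Y$ is not available, since it changes the left-hand side. Moreover this cannot be repaired: taking $A=B=X=I$ and $Y=\varepsilon I$ on any ${\mathscr H}(\Omega)$, part (ii) would read $1+\varepsilon\leqslant 2\sqrt{\varepsilon}$, which fails for every $\varepsilon\neq1$. (The paper's own proof of (ii) substitutes $A\mapsto\sqrt{t}A$, $B\mapsto t^{-1/2}B$, $X\mapsto tX$, $Y\mapsto t^{-1}Y$ into \eqref{popo2}, which silently replaces the left-hand side by $\mathbf{ber}\left(\mathfrak{Re}(e^{i\alpha}(tA^*XB\pm t^{-1}B^*YA))\right)$, so \eqref{bobo} does not follow; your honest scaling is precisely what exposes this, and the best constant your method, or the paper's, actually delivers is $\|X\|+\|Y\|$.)

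For part (i) your second, kernel-level route is the viable one and differs genuinely from the paper: Lemma \ref{lemma2} plus the scalar Cauchy--Schwarz grouping gives $\left|\left\langle(A^*XB+B^*YA)\hat{k}_{\lambda},\hat{k}_{\lambda}\right\rangle\right|\leqslant\left\langle(|X|+|Y^*|)B\hat{k}_{\lambda},B\hat{k}_{\lambda}\right\rangle^{\frac12}\left\langle(|X^*|+|Y|)A\hat{k}_{\lambda},A\hat{k}_{\lambda}\right\rangle^{\frac12}$, hence $\mathbf{ber}(A^*XB+B^*YA)\leqslant\sqrt{\|\,|X|+|Y^*|\,\|\;\|\,|X^*|+|Y|\,\|}\,\mathbf{ber}^{\frac12}(B^*B)\,\mathbf{ber}^{\frac12}(A^*A)$; to land on the stated form you still need the comparison $\|\,|X^*|+|Y|\,\|\leqslant\|X\|+\|Y\|\leqslant2\,\|\,|X|+|Y^*|\,\|$, which you do not supply (and note that, like the paper's own ingredient Theorem \ref{main2}(i), this produces $\mathbf{ber}(A^*A)$ rather than the $\mathbf{ber}(AA^*)$ printed in the statement). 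Your first route is strictly lossier: the initial triangle inequality together with $\mathbf{ber}(S_1)+\mathbf{ber}(S_2)\leqslant2\,\mathbf{ber}(S_1+S_2)$ leaves you with the constant $2\sqrt{\|\,|X|+|Y^*|\,\|\,\|\,|X^*|+|Y|\,\|}$, which for $X=Y=I$ equals $4>2\sqrt2$, so it does not prove (i) as stated. The paper instead goes through Lemma \ref{L1}: it writes $\mathfrak{Re}\left(e^{i\alpha}(A^*XB+B^*YA)\right)=\mathfrak{Re}\left(A^*(e^{i\alpha}X+e^{-i\alpha}Y^*)B\right)$, applies Theorem \ref{main2}(i), and bounds $\|e^{i\alpha}X+e^{-i\alpha}Y^*\|\leqslant\sqrt2\,\|\,|X|+|Y^*|\,\|$ by a $2\times2$ operator-matrix factorization and the Ando--Zhan inequality; your kernel argument avoids both the rotation lemma and the matrix trick and even gives a sharper geometric-mean constant for (i), but, as explained above, neither approach reaches (ii) with the constant $2\sqrt{\|X\|\|Y\|}$.
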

\begin{proof}
Applying Lemma \ref{L1} and Theorem \ref{main2}(\rm{i}), we have
\begin{align}\label{popo}
\mathbf{ber}\left(\mathfrak{Re}(e^{i\alpha}(A^*XB\pm B^*YA))\right)&=\mathbf{ber}\left(\mathfrak{Re}(A^*(e^{i\alpha} X\pm e^{-i\alpha}Y^*)B)\right)\nonumber\\&\qquad\qquad(\textrm{since}\, \mathfrak{Re}(T)=\mathfrak{Re}(T^*))\nonumber\\&
\leqslant\mathbf{ber}\left(A^*(e^{i\alpha} X\pm e^{-i\alpha}Y^*)B)\right)\nonumber\\&
\qquad\qquad(\textrm{by Lemma \ref{L1} for}\, \theta=0)\nonumber\\&
\leqslant\left\|e^{i\alpha} X\pm e^{-i\alpha}Y^*\right\|\mathbf{ber}^{\frac{1}{2}}\left(B^*B\right)\mathbf{ber}^{\frac{1}{2}}\left(AA^*\right)\nonumber
\\&\qquad\qquad(\textrm{by Theorem \ref{main2}}(\rm{i})).
\end{align}
It follows from the inequalities
\begin{align*}
\|e^{i\alpha}X\pm e^{-i\alpha}Y^\ast\|&=\left\|\left[\begin{array}{cc}
 e^{i\alpha}X\pm e^{-i\alpha}Y^\ast&0\\
 0&0
 \end{array}\right]\right\|\\&
 =\left\|\left[\begin{array}{cc}
 e^{i\alpha}&e^{-i\alpha}\\
 0&0
 \end{array}\right]\left[\begin{array}{cc}
 X&0\\
 \pm Y^\ast&0
 \end{array}\right]\right\|\\&\leqslant
 \left\|\left[\begin{array}{cc}
 e^{i\alpha}&e^{-i\alpha}\\
 0&0
 \end{array}\right]\right\|\left\|\left[\begin{array}{cc}
 X&0\\
 \pm Y^\ast&0
 \end{array}\right]\right\|
 \\&=
 \sqrt{2}\,\left\|\,\left|\left[\begin{array}{cc}
 X&0\\
 \pm Y^\ast&0
 \end{array}\right]\right|\,\right\|\\&
 =\sqrt{2}\left\|(|X|^2+|Y^\ast|^2)^{1\over2}\right\|\\&
 \leqslant \sqrt{2}\left\|\,|X|+|Y^\ast|\,\right\|\\&\qquad\qquad(\textrm{applying \cite[p. 775]{ando123} to the function} \,h(t)=t^{1\over2}),
 \end{align*}
  \eqref{popo} and Lemma \ref{L1} that
 \begin{align*}
\mathbf{ber}\left(A^*XB\pm B^*YA\right)&=\underset{\alpha \in
\mathbb{R}
}{\sup }\mathbf{ber}\left(\mathfrak{Re}(e^{i\alpha}(A^*XB\pm B^*YA))\right)\\&
\leqslant\sqrt{2}\left||\,|X|+|Y^\ast|\,\right\|\mathbf{ber}^{\frac{1}{2}}\left(B^*B\right)\mathbf{ber}^{\frac{1}{2}}\left(AA^*\right).
\end{align*}
 Thus, we get the first inequality. Moreover, Using inequality \eqref{popo} we have
 \begin{align}\label{popo2}
\mathbf{ber}\left(\mathfrak{Re}(e^{i\alpha}(A^*XB\pm B^*YA))\right)&
\leqslant\left\|e^{i\alpha} X\pm e^{-i\alpha}Y^*\right\|\mathbf{ber}^{\frac{1}{2}}\left(B^*B\right)\mathbf{ber}^{\frac{1}{2}}\left(AA^*\right)\nonumber
\\&\leqslant\left(\left\|X\right\|+\left\|Y\right\|\right)\mathbf{ber}^{\frac{1}{2}}\left(B^*B\right)\mathbf{ber}^{\frac{1}{2}}\left(AA^*\right).
\end{align}
Now,  if we replace $A$ by $\sqrt{t}A$, $B$ by $\sqrt{\frac{1}{t}}B$, $X$ by $tX$ and $Y$ by $\frac{1}{t}Y\,\,(t>0)$ in inequality \eqref{popo2}, then we get
\begin{equation}\label{bobo}
\mathbf{ber}\left(\mathfrak{Re}(e^{i\alpha}(A^*XB\pm B^*YA))\right)
\leqslant\left(t\left\|X\right\|+\frac{1}{t}\left\|Y\right\|\right)\mathbf{ber}^{\frac{1}{2}}\left(B^*B\right)\mathbf{ber}^{\frac{1}{2}}\left(AA^*\right).
\end{equation}
It follows from $\min_{t>0}\left(t\left\|X\right\|+\frac{1}{t}\left\|Y\right\|\right)=2\sqrt{\|X\|\|Y\|}$ and inequality \eqref{bobo} that
\begin{align*}
\mathbf{ber}\left(A^*XB\pm B^*YA\right)&=\underset{\alpha \in
\mathbb{R}
}{\sup }\mathbf{ber}\left(\mathfrak{Re}(e^{i\alpha}(A^*XB\pm B^*YA))\right)
\\&\leqslant2\sqrt{\|X\|\|Y\|}\mathbf{ber}^{\frac{1}{2}}\left(B^*B\right)\mathbf{ber}^{\frac{1}{2}}\left(AA^*\right).
\end{align*}
Hence, we get the second inequality.
\end{proof}
\begin{corollary}
If $A, B, X\in {\mathbb B}({\mathscr H}(\Omega))$,  then
\begin{align*}
&({\rm i})\,\,\mathbf{ber}(A^*X\pm XA)\leqslant2 \left\|X\right\|\mathbf{ber}^{\frac{1}{2}}\left(AA^*\right).\\&
({\rm ii})\,\,\mathbf{ber}(A^*B\pm B^*A)\leqslant2\mathbf{ber}^{\frac{1}{2}}\left(B^*B\right)\mathbf{ber}^{\frac{1}{2}}\left(AA^*\right).
\end{align*}
\end{corollary}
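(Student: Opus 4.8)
Both inequalities are immediate specializations of Theorem~\ref{main4}$({\rm ii})$, so the plan is simply to choose the right substitutions and simplify the resulting constants.

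For part $({\rm i})$, I would apply Theorem~\ref{main4}$({\rm ii})$ with $B$ replaced by the identity $I$ and $Y$ replaced by $X$. Then $A^*XB\pm B^*YA$ becomes $A^*X\pm XA$, the factor $2\sqrt{\|X\|\|Y\|}$ becomes $2\|X\|$, and $\mathbf{ber}^{\frac{1}{2}}(B^*B)=\mathbf{ber}^{\frac{1}{2}}(I)=1$, so the bound collapses to $2\|X\|\,\mathbf{ber}^{\frac{1}{2}}(AA^*)$, as claimed.

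For part $({\rm ii})$, I would instead take $X=Y=I$ in Theorem~\ref{main4}$({\rm ii})$. Then $A^*XB\pm B^*YA=A^*B\pm B^*A$, the constant $2\sqrt{\|X\|\|Y\|}$ equals $2$, and the right-hand side becomes $2\,\mathbf{ber}^{\frac{1}{2}}(B^*B)\,\mathbf{ber}^{\frac{1}{2}}(AA^*)$, which is exactly the desired inequality.

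There is essentially no obstacle here: the only thing to verify is that the substituted operators land in $\mathbb{B}(\mathscr{H}(\Omega))$ (trivially true, since $I$ lies in the algebra and $X,Y$ are arbitrary) and that the normalizing constants simplify as stated using $\|I\|=1$ and $\mathbf{ber}(I)=1$. Hence the corollary follows directly, with no further estimates required.
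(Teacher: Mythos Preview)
Your proposal is correct and matches the paper's own proof exactly: the paper also obtains $({\rm i})$ by setting $B=I$, $Y=X$ and $({\rm ii})$ by setting $X=Y=I$ in Theorem~\ref{main4}$({\rm ii})$. The only additional detail you spell out, namely $\mathbf{ber}(I)=\|I\|=1$, is indeed all that is needed for the simplification.
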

\begin{proof}
If we put $B=I$ and $X=Y$ in Theorem \ref{main4}$({\rm ii})$, then we reach the first inequality and if we take $X=Y=I$ in Theorem \ref{main4}$({\rm ii})$, then we get the second inequality.
\end{proof}
It is well known that
\begin{align}\label{hal}
 w(A^n)\leqslant w^n(A)
 \end{align}
for any $A\in {\mathbb B}({\mathscr H})$ and $n\geqslant1$. Let $\mathbb{D}=\{z\in \mathbb{C}: |z|<1\}$. Note that for any Toeplitz
operator $T_\phi$ with $\phi\in L^{\infty}(\partial \mathbb{D})$ we have $\widetilde{T}_\phi(\lambda)=\widetilde{\phi}(\lambda)\,\,(\lambda\in\mathbb{D})$, where
$\widetilde{\phi}$ is the harmonic extension of $\phi$ into $\mathbb{D}$ ( see, for instance Engli\v{s} \cite{englis}). Therefore, it is easy to see that
  \begin{align*}
 \mathbf{ber}(T_\phi)=\|\phi\|_{\infty}=\|T_\phi\|=w(T_\phi),
 \end{align*}
 which implies that $\mathbf{ber}\left((T_\phi)^n\right)\leqslant \mathbf{ber}^n(T_\phi)$ for any positive integer $n$. In general, inequality \eqref{hal} and the trivial inequality $\mathbf{ber}(A)\leqslant w(A)$ imply that
 \begin{align*}
 \mathbf{ber}(A^n)\leqslant\mathbf{ber}^n(A)\left(\frac{w(A)}{\mathbf{ber}(A)}\right)^n
 \end{align*}
 for any $A\in {\mathbb B}({\mathscr H}(\Omega))$ and $n\geqslant1$.\\
 It is natural to ask: does the same property holds  true for  the Berezin number of $A$, i.e.  is it true that $\mathbf{ber}(A^n)\leqslant\mathbf{ber}^n(A)?$\\
 Here we give some partial answers to this question.
 \begin{theorem}\label{main-karaev}
Let $A\in {\mathbb B}({\mathscr H}(\Omega))$ be an operator such that
\begin{align*}
&({\rm i})\,\,\lim_{\lambda\rightarrow \partial \mathbb{D}}\widetilde{A^n}(\lambda)\not=0\,\,\textrm{for any integer}\,\, n\geqslant1;\\&
({\rm ii})\,\,\lim_{\lambda\rightarrow \partial \mathbb{D}}\|(A^*-\widetilde{A^*}(\lambda))\hat{k}_{\lambda}\|=0.
\end{align*}
Then $\mathbf{ber}(A^n)\leqslant\mathbf{ber}^n(A)$ for any integer $n\geqslant1$.
\end{theorem}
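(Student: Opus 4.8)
The plan is to read hypothesis (ii) as saying that, in the limit $\lambda\to\partial\mathbb{D}$, the normalized reproducing kernel $\hat{k}_{\lambda}$ is an approximate eigenvector of $A^{*}$ with approximate eigenvalue $\widetilde{A^{*}}(\lambda)$, and then to iterate this. First observe that $\widetilde{A^{*}}(\lambda)=\langle A^{*}\hat{k}_{\lambda},\hat{k}_{\lambda}\rangle=\overline{\langle A\hat{k}_{\lambda},\hat{k}_{\lambda}\rangle}=\overline{\widetilde{A}(\lambda)}$, so (ii) reads $A^{*}\hat{k}_{\lambda}=\overline{\widetilde{A}(\lambda)}\,\hat{k}_{\lambda}+r_{\lambda}$ with $\|r_{\lambda}\|\to 0$ as $\lambda\to\partial\mathbb{D}$.

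Next I would prove by induction on $n$ that $(A^{*})^{n}\hat{k}_{\lambda}=\overline{\widetilde{A}(\lambda)}^{\,n}\hat{k}_{\lambda}+r_{\lambda}^{(n)}$, where $r_{\lambda}^{(n)}=\overline{\widetilde{A}(\lambda)}^{\,n-1}r_{\lambda}+A^{*}r_{\lambda}^{(n-1)}$ and $r_{\lambda}^{(1)}=r_{\lambda}$. Using $|\widetilde{A}(\lambda)|\le\|A\|=\|A^{*}\|$ one gets $\|r_{\lambda}^{(n)}\|\le\|A\|^{\,n-1}\|r_{\lambda}\|+\|A\|\,\|r_{\lambda}^{(n-1)}\|$, so $\|r_{\lambda}^{(n)}\|\to 0$ as $\lambda\to\partial\mathbb{D}$ for every fixed $n$. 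Taking the inner product with $\hat{k}_{\lambda}$ and using $\widetilde{A^{n}}(\lambda)=\langle A^{n}\hat{k}_{\lambda},\hat{k}_{\lambda}\rangle=\langle\hat{k}_{\lambda},(A^{*})^{n}\hat{k}_{\lambda}\rangle$ gives
\[
\widetilde{A^{n}}(\lambda)=\widetilde{A}(\lambda)^{n}+\langle\hat{k}_{\lambda},r_{\lambda}^{(n)}\rangle ,
\]
and since $|\langle\hat{k}_{\lambda},r_{\lambda}^{(n)}\rangle|\le\|r_{\lambda}^{(n)}\|\to 0$, we conclude $\widetilde{A^{n}}(\lambda)-\widetilde{A}(\lambda)^{n}\to 0$ as $\lambda\to\partial\mathbb{D}$. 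In particular $|\widetilde{A^{n}}(\lambda)|\le|\widetilde{A}(\lambda)|^{n}+o(1)\le\mathbf{ber}^{n}(A)+o(1)$, whence $\limsup_{\lambda\to\partial\mathbb{D}}|\widetilde{A^{n}}(\lambda)|\le\mathbf{ber}^{n}(A)$.

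Finally I would bring in hypothesis (i): the limit $L_{n}=\lim_{\lambda\to\partial\mathbb{D}}\widetilde{A^{n}}(\lambda)$ exists and is nonzero, so by the estimate just established $L_{n}=\lim_{\lambda\to\partial\mathbb{D}}\widetilde{A}(\lambda)^{n}$; thus $\widetilde{A}$ has a nonzero boundary limit $L$, with $L_{n}=L^{n}$ and $|L|\le\mathbf{ber}(A)$. It remains to pass from this boundary estimate to $\mathbf{ber}(A^{n})=\sup_{\lambda\in\mathbb{D}}|\widetilde{A^{n}}(\lambda)|$: since $\widetilde{A^{n}}$ extends continuously to $\overline{\mathbb{D}}$ with constant boundary value $L^{n}$, one must argue that its modulus cannot exceed $|L|^{n}\le\mathbf{ber}^{n}(A)$ inside $\mathbb{D}$ either, i.e. that the supremum defining $\mathbf{ber}(A^{n})$ is attained along a sequence tending to $\partial\mathbb{D}$; granting this, the estimate above finishes the proof. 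I expect this last point to be the main obstacle — it is a maximum-modulus-type statement for the Berezin symbol, which is not holomorphic in general — and it is exactly here that hypothesis (i), demanded for every $n\ge 1$, is used; one natural route is to note that (i) and (ii) are inherited by $A^{n}$ (via $\widetilde{(A^{n})^{k}}=\widetilde{A^{nk}}$ and the second step applied to $A^{*}$) and to iterate, or to compare $\mathbf{ber}(A^{m})^{1/m}$ with the spectral radius of $A$ as $m\to\infty$.
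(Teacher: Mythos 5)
Your boundary computation is correct: iterating (ii) does give $(A^{*})^{n}\hat{k}_{\lambda}=\overline{\widetilde{A}(\lambda)}^{\,n}\hat{k}_{\lambda}+r^{(n)}_{\lambda}$ with $\|r^{(n)}_{\lambda}\|\leqslant n\|A\|^{n-1}\|(A^{*}-\widetilde{A^{*}}(\lambda))\hat{k}_{\lambda}\|\rightarrow0$, hence $\widetilde{A^{n}}(\lambda)-\widetilde{A}(\lambda)^{n}\rightarrow0$ and $\limsup_{\lambda\rightarrow\partial\mathbb{D}}|\widetilde{A^{n}}(\lambda)|\leqslant\mathbf{ber}^{n}(A)$. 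But the argument stops exactly at the point you yourself flag: nothing you wrote controls $\mathbf{ber}(A^{n})=\sup_{\lambda\in\Omega}|\widetilde{A^{n}}(\lambda)|$ by its boundary behaviour. This is a genuine gap, not a technicality: the Berezin symbol is not holomorphic, there is no maximum principle, and a priori the supremum could be approached only at interior points, where your estimate says nothing. The two escape routes you sketch are not carried out and do not obviously close it — applying the same scheme to $A^{n}$ reproduces the identical difficulty, and a comparison of $\mathbf{ber}(A^{m})^{1/m}$ with the spectral radius does not yield $\mathbf{ber}(A^{n})\leqslant\mathbf{ber}^{n}(A)$.

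For comparison, the paper closes precisely this step with hypothesis (i), which your write-up essentially does not use (you only invoke it to name the limits $L_{n}$). After normalizing so that $\mathbf{ber}(A)\leqslant1$ (replace $A$ by $A/\mathbf{ber}(A)$; both hypotheses survive), the paper argues by induction on $n$: at step $k+1$ it uses (i) to produce a sequence $\lambda_{m}\rightarrow\xi_{0}\in\partial\mathbb{D}$ with $|\widetilde{A^{k+1}}(\lambda_{m})|>\mathbf{ber}(A^{k+1})-\epsilon_{m}$, writes $\widetilde{A^{k+1}}(\lambda_{m})=\langle A^{k}\hat{k}_{\lambda_{m}},(A^{*}-\widetilde{A^{*}}(\lambda_{m}))\hat{k}_{\lambda_{m}}\rangle+\widetilde{A}(\lambda_{m})\,\widetilde{A^{k}}(\lambda_{m})$, and lets $m\rightarrow\infty$: the first term dies by (ii) and the second is at most $\mathbf{ber}(A)\,\mathbf{ber}(A^{k})\leqslant1$ by the induction hypothesis. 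In other words, the entire role of (i) in the paper is to guarantee that the supremum defining $\mathbf{ber}(A^{k+1})$ is approached along a sequence tending to $\partial\mathbb{D}$ — exactly the ``maximum-modulus-type'' statement you leave unproven. To complete your version you must supply that boundary-attainment step (from (i), or as an added assumption); without it the proposal proves only a boundary estimate, not the theorem.
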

\begin{proof}
First, let us prove by induction that if $\mathbf{ber}(A)\leqslant 1$, then $\mathbf{ber}(A^n)\leqslant1$ for any integer $n\geqslant1$.
In fact, for $n=1$ it is trivial. For $n=k$ we assume that $\mathbf{ber}(A^k)\leqslant1$, and we prove that $\mathbf{ber}(A^{k+1})\leqslant1$. We set
$L:=\mathbf{ber}(A^{k+1})$. Then $\widetilde{A^{k+1}}(\lambda)\leqslant L$ for all $\lambda\in\Omega$, and by virtue of condition $({\rm i})$,
for any sequence $(\epsilon_n)\subset(0,1)$ such that $\lim_{n\rightarrow\infty}\epsilon_n=0$ there exists the sequence
$(\lambda_n)\subset\Omega$ with $\lim_{n\rightarrow\infty}\lambda_n=\xi_0\in\partial \mathbb{D}$ such that $|\widetilde{A^{k+1}}(\lambda_n)|>L-\epsilon_n\,\,
(n\geqslant1)$. Then, by using that $\mathbf{ber}(A^k)\leqslant1$, we have
\begin{align*}
L-\epsilon_n&<|\widetilde{A^{k+1}}(\lambda_n)|\\&
=\left|\left\langle A^{k+1}\hat{k}_{\lambda_n},\hat{k}_{\lambda_n}\right\rangle\right|\\&
=\left|\left\langle A^{k}\hat{k}_{\lambda_n},A^*\hat{k}_{\lambda_n}\right\rangle\right|\\&
=\left|\left\langle A^{k}\hat{k}_{\lambda_n},A^*\hat{k}_{\lambda_n}-\widetilde{A^{*}}(\lambda_n)\hat{k}_{\lambda_n}\right\rangle+
\widetilde{A}(\lambda_n)\left\langle A^{k}\hat{k}_{\lambda_n},\hat{k}_{\lambda_n}\right\rangle\right|\\&
\leqslant\|A^k\|\|A^*\hat{k}_{\lambda_n}-\widetilde{A^{*}}(\lambda_n)\hat{k}_{\lambda_n}\|+
|\widetilde{A}(\lambda_n)||\widetilde{A^k}(\lambda_n)|\\&
\leqslant\|A^k\|\|(A^*-\widetilde{A^{*}}(\lambda_n))\hat{k}_{\lambda_n}\|+
\mathbf{ber}(A)\mathbf{ber}(A^k),
\end{align*}
whence
\begin{align*}
L\leqslant\|A^k\|\|A^*\hat{k}_{\lambda_n}-\widetilde{A^{*}}(\lambda_n)\hat{k}_{\lambda_n}\|+
1+\epsilon_n.
\end{align*}
Using condition $({\rm ii})$ we get $L\leqslant1$ whenever $n$ tends to infinity. Now, since the operator $\frac{A}{\mathbf{ber}(A)}$ also satisfies conditions
$({\rm i})$ and $({\rm ii})$, and $\mathbf{ber}\left(\frac{A}{\mathbf{ber}(A)}\right)=1$ we obtain $\mathbf{ber}\left(\left(\frac{A}{\mathbf{ber}(A)}\right)^n\right)\leqslant1$, which implies $\mathbf{ber}(A^n)\leqslant\mathbf{ber}^n(A)$, as required and this completes the proof.
\end{proof}
\begin{remark}
Every Toeplitz operator on the Hardy space ${\mathscr H}^2(\mathbb{D})$ satisfies condition $({\rm ii})$ of Theorem \ref{main-karaev}
(see Engli\v{s} \cite{englis} and Karaev \cite{kar333}), and there are many Toeplitz operators satisfying conditions of Theorem \ref{main-karaev} (see Axler and Zheng \cite{axler}, Engli\v{s}  \cite{englis} and Karaev et al. \cite{kar5}).
\end{remark}
Note that Berezin symbol has not in general multiplicative property $\widetilde{AB}=\widetilde{A}\widetilde{B}$ (for more information, see Kili\v{c} \cite{kilic}).
Our next result proves the inequality $\mathbf{ber}(AB)\leqslant \mathbf{ber}(A)\mathbf{ber}(B)$ for some operators.
\begin{proposition}\label{kara-pro}
Let $A, B\in {\mathbb B}({\mathscr H}(\Omega))$. If $\overline{\lim}_{\lambda\rightarrow\xi_0}\|(A-\widetilde{A}(\lambda))^*\hat{k}_{\lambda}\|=0$
for some $\xi_0\in\partial\mathbb{D}$, then
\begin{align*}
\overline{\lim}_{\lambda\rightarrow\xi_0}|\widetilde{AB}(\lambda)|\leqslant\mathbf{ber}(A)\mathbf{ber}(B).
\end{align*}
In particular, if  $\overline{\lim}_{\lambda\rightarrow\xi_0}|\widetilde{AB}(\lambda)|=\mathbf{ber}(AB)$, then $\mathbf{ber}(AB)\leqslant \mathbf{ber}(A)\mathbf{ber}(B)$.
  \end{proposition}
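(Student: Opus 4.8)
The plan is to expand $\widetilde{AB}(\lambda)$ as a product of symbols plus an error term that is exactly what the hypothesis controls. First I would use the reproducing property and the identity $(A-\widetilde{A}(\lambda)I)^{*}=A^{*}-\overline{\widetilde{A}(\lambda)}I$ to write
$$\widetilde{AB}(\lambda)=\left\langle AB\hat{k}_{\lambda},\hat{k}_{\lambda}\right\rangle=\left\langle B\hat{k}_{\lambda},A^{*}\hat{k}_{\lambda}\right\rangle=\left\langle B\hat{k}_{\lambda},\bigl(A-\widetilde{A}(\lambda)\bigr)^{*}\hat{k}_{\lambda}\right\rangle+\left\langle B\hat{k}_{\lambda},\overline{\widetilde{A}(\lambda)}\,\hat{k}_{\lambda}\right\rangle.$$
Pulling the scalar out of the (conjugate-linear) second slot, the last term equals $\widetilde{A}(\lambda)\left\langle B\hat{k}_{\lambda},\hat{k}_{\lambda}\right\rangle=\widetilde{A}(\lambda)\widetilde{B}(\lambda)$, so
$$\widetilde{AB}(\lambda)=\left\langle B\hat{k}_{\lambda},\bigl(A-\widetilde{A}(\lambda)\bigr)^{*}\hat{k}_{\lambda}\right\rangle+\widetilde{A}(\lambda)\,\widetilde{B}(\lambda).$$

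Next I would bound the error term by the Cauchy--Schwarz inequality together with $\|\hat{k}_{\lambda}\|=1$, getting $\bigl|\langle B\hat{k}_{\lambda},(A-\widetilde{A}(\lambda))^{*}\hat{k}_{\lambda}\rangle\bigr|\leqslant\|B\|\,\bigl\|(A-\widetilde{A}(\lambda))^{*}\hat{k}_{\lambda}\bigr\|$, and then use the trivial estimates $|\widetilde{A}(\lambda)|\leqslant\mathbf{ber}(A)$ and $|\widetilde{B}(\lambda)|\leqslant\mathbf{ber}(B)$ to obtain
$$\bigl|\widetilde{AB}(\lambda)\bigr|\leqslant\|B\|\,\bigl\|\bigl(A-\widetilde{A}(\lambda)\bigr)^{*}\hat{k}_{\lambda}\bigr\|+\mathbf{ber}(A)\,\mathbf{ber}(B).$$
Taking $\overline{\lim}_{\lambda\rightarrow\xi_0}$ of both sides and invoking the hypothesis $\overline{\lim}_{\lambda\rightarrow\xi_0}\bigl\|(A-\widetilde{A}(\lambda))^{*}\hat{k}_{\lambda}\bigr\|=0$ makes the first summand vanish, which yields $\overline{\lim}_{\lambda\rightarrow\xi_0}|\widetilde{AB}(\lambda)|\leqslant\mathbf{ber}(A)\mathbf{ber}(B)$. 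The ``in particular'' clause is then immediate: if $\overline{\lim}_{\lambda\rightarrow\xi_0}|\widetilde{AB}(\lambda)|=\mathbf{ber}(AB)$, the displayed inequality reads $\mathbf{ber}(AB)\leqslant\mathbf{ber}(A)\mathbf{ber}(B)$.

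I do not expect a genuine obstacle; this is the same short computation used in the proof of Theorem \ref{main-karaev}. The only points needing a little care are the bookkeeping of complex conjugates when extracting $\widetilde{A}(\lambda)$ from the inner product, and recording that $(A-\widetilde{A}(\lambda)I)^{*}=A^{*}-\overline{\widetilde{A}(\lambda)}I$, so that the boundary hypothesis is precisely the condition that annihilates the cross term. It is also worth remarking (as the statement already does) that the conclusion is of interest exactly when the secondary hypothesis holds, e.g. when $\widetilde{AB}$ approaches its supremum along a sequence tending to the boundary, which parallels the situation for powers treated in Theorem \ref{main-karaev}.
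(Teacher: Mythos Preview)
Your proposal is correct and follows essentially the same route as the paper: split $A^{*}\hat{k}_{\lambda}$ into $(A-\widetilde{A}(\lambda))^{*}\hat{k}_{\lambda}$ plus the scalar multiple $\overline{\widetilde{A}(\lambda)}\hat{k}_{\lambda}$, bound the cross term by $\|B\|\,\|(A-\widetilde{A}(\lambda))^{*}\hat{k}_{\lambda}\|$ via Cauchy--Schwarz, and pass to the $\overline{\lim}$. If anything, you have spelled out the conjugate bookkeeping (using $\widetilde{A^{*}}=\overline{\widetilde{A}}$) more carefully than the paper does.
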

\begin{proof}
It follows from $\widetilde{A^*}=\overline{\widetilde{A}}$ that for all $\lambda\in\mathbb{D}$ we have
\begin{align*}
\widetilde{AB}(\lambda)&=\left|\left\langle AB\hat{k}_{\lambda},\hat{k}_{\lambda}\right\rangle\right|\\&
=\left|\left\langle B\hat{k}_{\lambda},A^*\hat{k}_{\lambda}\right\rangle\right|\\&
\leqslant\|B\|\left\|A^*\hat{k}_{\lambda}-\widetilde{A^*}(\lambda)\hat{k}_{\lambda}\right\|+\mathbf{ber}(A)\mathbf{ber}(B),
\end{align*}
from which by using the hypotheses  of the theorem, we have that there exists a point $\xi_0\in\partial\mathbb{D}$ such that
$\overline{\lim}_{\lambda\rightarrow\xi_0}|\widetilde{AB}(\lambda)|\leqslant\mathbf{ber}(A)\mathbf{ber}(B),$ as desired. The second assertion
of the theorem is immediate from the first one. The proposition is proved.
\end{proof}
\begin{proposition}
If $A, B\in {\mathbb B}({\mathscr H}(\Omega))$ and  $\widetilde{AB}(\lambda)\rightarrow0$ whenever $\lambda\rightarrow\partial\Omega$, then there exists
a point $\lambda_0\in\Omega$ such that
\begin{align*}
\mathbf{ber}(AB)- \mathbf{ber}(A)\mathbf{ber}(B)\leqslant\sqrt{\widetilde{B^*B}(\lambda_0)(\widetilde{AA^*}(\lambda_0)-|\widetilde{A}(\lambda_0)|^2)}.
\end{align*}
  \end{proposition}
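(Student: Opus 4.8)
The plan is to derive a pointwise estimate on $|\widetilde{AB}(\lambda)|$ and then, using the boundary hypothesis, evaluate it at a point where $\mathbf{ber}(AB)$ is actually achieved. First I would write, as in the proof of Proposition~\ref{kara-pro},
$$\widetilde{AB}(\lambda)=\big\langle B\hat{k}_{\lambda},A^*\hat{k}_{\lambda}\big\rangle,$$
and decompose $A^*\hat{k}_{\lambda}=\widetilde{A^*}(\lambda)\hat{k}_{\lambda}+\big(A^*-\widetilde{A^*}(\lambda)\big)\hat{k}_{\lambda}$. Using $\widetilde{A^*}(\lambda)=\overline{\widetilde{A}(\lambda)}$, the triangle inequality and the Cauchy--Schwarz inequality, this yields
$$\big|\widetilde{AB}(\lambda)\big|\leqslant\big|\widetilde{A}(\lambda)\big|\,\big|\widetilde{B}(\lambda)\big|+\big\|B\hat{k}_{\lambda}\big\|\,\big\|\big(A^*-\widetilde{A^*}(\lambda)\big)\hat{k}_{\lambda}\big\|.$$

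The second step is to rewrite the two norms on the right in terms of Berezin symbols. Since $\hat{k}_{\lambda}$ is a unit vector, $\|B\hat{k}_{\lambda}\|^2=\langle B^*B\hat{k}_{\lambda},\hat{k}_{\lambda}\rangle=\widetilde{B^*B}(\lambda)$; and expanding the square, together with $\langle A^*\hat{k}_{\lambda},\hat{k}_{\lambda}\rangle=\widetilde{A^*}(\lambda)$, gives $\|(A^*-\widetilde{A^*}(\lambda))\hat{k}_{\lambda}\|^2=\|A^*\hat{k}_{\lambda}\|^2-|\widetilde{A^*}(\lambda)|^2=\widetilde{AA^*}(\lambda)-|\widetilde{A}(\lambda)|^2$, a quantity which is nonnegative by Cauchy--Schwarz, so the square root below is legitimate. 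Bounding the first term by $|\widetilde{A}(\lambda)||\widetilde{B}(\lambda)|\leqslant\mathbf{ber}(A)\mathbf{ber}(B)$, I obtain the pointwise inequality
$$\big|\widetilde{AB}(\lambda)\big|\leqslant\mathbf{ber}(A)\mathbf{ber}(B)+\sqrt{\widetilde{B^*B}(\lambda)\big(\widetilde{AA^*}(\lambda)-|\widetilde{A}(\lambda)|^2\big)}\qquad(\lambda\in\Omega).$$

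Finally I would invoke the hypothesis $\widetilde{AB}(\lambda)\to0$ as $\lambda\to\partial\Omega$. If $\mathbf{ber}(AB)=0$ the asserted inequality is trivial, the right-hand side being nonnegative, so assume $\mathbf{ber}(AB)>0$. Taking $\varepsilon=\frac{1}{2}\mathbf{ber}(AB)$, the boundary condition provides a compact set $K\subset\Omega$ outside which $|\widetilde{AB}|<\varepsilon<\mathbf{ber}(AB)$; since $\widetilde{AB}$ is continuous on $\Omega$, the supremum $\mathbf{ber}(AB)=\sup_{\lambda\in\Omega}|\widetilde{AB}(\lambda)|$ is attained at some $\lambda_0\in K\subset\Omega$. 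Evaluating the pointwise estimate at $\lambda_0$ and rearranging gives precisely the claimed inequality. The only genuinely delicate point, which I would flag as the main obstacle, is the existence of this maximizer $\lambda_0$: it rests on the continuity of $\widetilde{AB}$ on $\Omega$ and on reading the boundary-decay hypothesis as forcing the supremum onto a compact subset of $\Omega$. Everything else is a routine reproducing-kernel computation mirroring the proof of Proposition~\ref{kara-pro}.
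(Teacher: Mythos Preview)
Your proposal is correct and follows essentially the same route as the paper: the pointwise estimate via the decomposition $A^*\hat{k}_{\lambda}=\widetilde{A^*}(\lambda)\hat{k}_{\lambda}+(A^*-\widetilde{A^*}(\lambda))\hat{k}_{\lambda}$ is exactly what the paper does, and the identification of the two norms with $\widetilde{B^*B}(\lambda)$ and $\widetilde{AA^*}(\lambda)-|\widetilde{A}(\lambda)|^2$ is the same computation. The only cosmetic difference is in locating $\lambda_0$: the paper takes a maximizing sequence $(\lambda_n)$ for $|\widetilde{AB}|$, observes it cannot accumulate on $\partial\Omega$ by the boundary hypothesis, and passes to a limit point $\lambda_0\in\Omega$, whereas you phrase the same idea as a compactness argument; your version is in fact a bit more careful in separating out the trivial case $\mathbf{ber}(AB)=0$ and in flagging the role of continuity of $\widetilde{AB}$.
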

\begin{proof}
By the same argument as in the proof of Proposition \ref{kara-pro}, we have
\begin{align*}
\widetilde{AB}(\lambda)&
\leqslant\mathbf{ber}(A)\mathbf{ber}(B)+\|B\hat{k}_{\lambda}\|\left\|A^*\hat{k}_{\lambda}-\widetilde{A^*}(\lambda)\hat{k}_{\lambda}\right\|\\&
=\mathbf{ber}(A)\mathbf{ber}(B)+\sqrt{\widetilde{B^*B}(\lambda)(\widetilde{AA^*}(\lambda)-|\widetilde{A}(\lambda)|^2)}
\end{align*}
for $\lambda\in\Omega$. Since the set $\left\{\left\langle AB\hat{k}_{\lambda},\hat{k}_{\lambda}\right\rangle:\lambda\in\Omega\right\}$
is bounded, there exists a sequence $(\lambda_n)\subset \Omega$ such that $\mathbf{ber}(AB)=\underset{\lambda \in
\Omega
}{\sup }\big|\widetilde{AB}(\lambda)\big|=\lim_{n\rightarrow\infty}\left|\left\langle AB\hat{k}_{\lambda_n},\hat{k}_{\lambda_n}\right\rangle\right|.$
On the other hand, by the hypotheses $\widetilde{AB}(\lambda)\rightarrow0$ whenever $\lambda\rightarrow\partial\Omega$, and hence the sequence $(\lambda_n)$ can not approach to the boundary $\partial\Omega$. This shows there exists $\lambda_0\in\Omega$ such that $\lim_{n\rightarrow\infty}\lambda_n=\lambda_0$. Then we obtain
from the last inequality that
\begin{align*}
\mathbf{ber}(AB)-\mathbf{ber}(A)\mathbf{ber}(B)\leqslant\sqrt{\widetilde{B^*B}(\lambda_0)(\widetilde{AA^*}(\lambda_0)-|\widetilde{A}(\lambda_0)|^2)}.
\end{align*}
\end{proof}
\textbf{Acknowledgement.} The first author would like to thank the Tusi Mathematical Research Group (TMRG). The second author was supported by King Saud
University, Deanship of Scientific Research, College of Science Research Center.

\bigskip

\end{document}